\setlist[enumerate,1]{label={(\roman*)}}
\crefname{eexample}{example}{examples}
\setlist[enumerate,1]{label=(\roman*),parsep=0.4\baselineskip}
\theoremstyle{plain}
\newtheorem{theorem}{Theorem}%[chapter]
\newtheorem{proposition}[theorem]{Proposition}
\newtheorem{corollary}[theorem]{Corollary}
\theoremstyle{definition}
\newtheorem{definition}[theorem]{Definition}
\newtheorem{remark}[theorem]{Remark}
\newtheorem{example}[theorem]{Example}
\newcommand{\norm}[1]{\left\lVert#1\right\rVert}
\DeclareMathOperator*{\argmin}{arg\,min}
\newcommand{\ubar}[1]{\underaccent{\bar}{#1}}
\begin{document}

\title{\bf Towards multiobjective optimization \\ and control of smart grids\thanks{The authors acknowledge funding by the Federal Ministry for Education and Research (BMBF; grant 05M18SIA). Karl Worthmann is indebted to the German Research Foundation (DFG; grant WO 2056/6-1).}}

\author{Philipp Sauerteig\thanks{Institute for Mathematics, Faculty of Mathematics and Natural Sciences, Technische Universit\"at Ilmenau, Ilmenau, Germany (philipp.sauerteig@tu-ilmenau.de).} \ and Karl Worthmann\thanks{Institute for Mathematics, Faculty of Mathematics and Natural Sciences, Technische Universit\"at Ilmenau, Ilmenau, Germany (karl.worthmann@tu-ilmenau.de).}}

\maketitle

\begin{center}
\begin{minipage}{0.8\textwidth}
	The rapid uptake of renewable energy sources in the electricity grid leads to a demand in load shaping and flexibility. Energy storage devices such as batteries are a key element to provide solutions to these tasks. However, typically a trade-off between the performance related goal of load shaping and the objective of having flexibility in store for auxiliary services, which is for example linked to robustness and resilience of the grid, can be observed. We propose to make use of the concept of Pareto optimality in order to resolve this issue in a multiobjective framework. In particular, we analyse the Pareto frontier and quantify the trade-off between the non-aligned objectives to properly balance them.
\end{minipage}
\end{center}

\section{Introduction}

The energy transition has triggered a rapid uptake of renewable energies in the electricity grid, which may lead to severe problems in the energy supply, see, e.g.\ the introductory paragraphs of~\cite{RatnWell13,WortKell15} and the references therein. In this paper, we focus on the integration of energy storage devices~\cite{WestNico08}, e.g.\ batteries, in microgrids~\cite{OlivMehr14}, see also~\cite{ParhLotf15} for a review article and~\cite{LotfKhod17} for questions concerning AC/DC. Based on forecasts~\cite{AppiOrdi18,MeerShep18}, consumption and production peaks can be anticipated, such that a receding horizon strategy, typically realized via a Model Predictive Control (MPC) scheme~\cite{PalmBena13,PariRiko14,worthmann2014distributed}, is idealy suited to tackle this task, see also the textbooks~\cite{GrunPann17,RawlingsMayneDiehl2017} for an introduction to (nonlinear) MPC. An aspect of particular interest is whether the optimization based control algorithm is realized in a decentralized, distributed, or centralized fashion, see, e.g.\ \cite{WortKell15} and \cite{HidaMyrz18}.

A major concern from a grid operator's perspective is the volatile power demand due to residential energy generation. In recent papers, the authors exploit inherent flexibility provided by energy storage devices to smoothen the aggregated power demand profile by approximating a given reference value~\cite{WortKell15}. Here, a key issue was the realization of the global optimum by using distributed control --- both in a cooperative~\cite{BrauGrun16} and a non-cooperative setting~\cite{BrauGrun17}. Another --~potentially conflicting~-- objective is to stay within time-varying tubes as introduced in~\cite{BrauFaul18}, which may not contain the reference value enforced by the first optimization goal. %
The motivation behind this approach is to provide flexibility to the grid operator, which may be used to counteract other shortcomings within the network~\cite{MajzKhod17}. The contribution of this paper is to embed this problem into a multiobjective framework, link it to the scalarized objective function and investigate the connection to the Pareto frontier. Furthermore, we also introduce a more restrictive optimality concept --~proper optimality in the sense of Geoffrion. By doing so, it becomes possible to quantify the trade-off between the different objectives, which is an essential prerequisite for structured decision making process. 

Moreover, we provide numerical simulations to investigate the MPC closed loop within our multiobjective framework. Hereby, we use data from an Australian distribution network dataset, 
see~\cite{RatnWell17} for further details.

{\bf Notation:} For two integers $m$ and $n$ with $m\leq n$, we use the notation $[m:n] := \{m,m+1,\ldots,n\}$.

\section{Problem Formulation}

In this section, we firstly introduce a mathematical model of a prosumer, i.e.\ a residential energy system equipped with both solar-photovoltaic panel (or any other device for energy generation) and an energy storage device like a battery. %
Secondly, we consider a (potentially) large network of $\mathcal{I}$, $\mathcal{I} \in \mathbb{N}$, such subsystems. The individual subsystems are connected via a common point of coupling, the grid operator (simply termed Central Entity; CE), see Figure~\ref{fig:u-constraints} (left). %
So far, we essentially follow the modelling approach presented in~\cite{RatnWell13,WortKell15} and further developed in~\cite{BrauFaul18}. An alternative would be to also model the underlying network structure as graph and, then, to incorporate the physical connections within the microgrid, see, e.g.\ \cite{FlocBans17}.%

The CE  
may pursue different objectives, which leads to a Multiobjective Optimization Problem (MOP) since we assume that the subsystems are cooperative. Here, we exemplarily present two criteria and numerically investigate the network performance w.r.t.\ each of the two objectives.

\subsection{Modelling of the System Dynamics and the Constraints}

The $i$-th subsystem, $i \in [1:\mathcal{I}]$, is given by
\begin{eqnarray}\label{eq:RES}
	\begin{split}
		x_i(k+1)&=&\alpha_ix_i(k)+T(\beta_iu_i^+(k)+u_i^-(k))\\
		z_i(k)&=&w_i(k)+u_i^+(k)+\gamma_iu_i^-(k)
	\end{split}
\end{eqnarray}
where $x_i(k)$ describes the State Of Charge (SOC) of the battery and $z_i(k)$ the power demand at time instant $k \in \mathbb{N}_0$. The parameter $T>0$ is the length of one time step in hours, e.g. $T=0.5$ corresponds to a half hour window. %
The variables $u_i^+(k)$ and $u_i^-(k)$ represent the charging and discharging rate, resp., while $w_i(k)$ is the net consumption (load minus generation). The constants $\alpha_i,\beta_i,\gamma_i\in(0,1]$ model efficiencies w.r.t. to self discharge and energy conversion, resp. %
The initial SOC at $k=0$ is denoted by $x_i^0$, i.e.\ $x_i(0) = x_i^0$ holds. % 
The system variables are subject to the constraints
\begin{subequations}\label{subeq:constraints}
\begin{eqnarray}
	0 \quad \leq \quad & x_i(k) & \leq \quad C_i \\
	\label{eq:constraints:discharge} \underline{u}_i \quad \leq \quad & u_i^-(k) & \leq \quad 0\\
	\label{eq:constraints:charge} 0 \quad \leq \quad & u_i^+(k) & \leq \quad \bar{u}_i \\
	\label{eq:constraints:chargeANDdischarge} 0 \quad \leq \quad & \frac{u_i^-(k)}{\underline{u}_i} + \frac{u_i^+(k)}{\bar{u}_i} \quad & \leq \quad 1,
\end{eqnarray}
\end{subequations}
where $C_i \geq 0$ denotes the battery capacity . %
Constraint~\eqref{eq:constraints:chargeANDdischarge} ensures that the bounds on charging~\eqref{eq:constraints:charge} and discharging~\eqref{eq:constraints:discharge} hold, even if charging and discharging occur during the same time interval, see Figure~\ref{fig:u-constraints} (right). For a concise notation, we define the state constraint $\mathbb{X}_i := [0,C_i]$ and the set of feasible control values $\mathbb{U}_i := \Set{(u_i^-,u_i^+)^\top \in \mathbb{R}^2| \text{{\eqref{eq:constraints:discharge}-\eqref{eq:constraints:chargeANDdischarge} hold}}}$. Furthermore, we use the notation $\bar{\cdot}$ to denote the average value over all subsystems, e.g. $\bar{w}(k) = \frac{1}{\mathcal{I}} \sum_{i=1}^{\mathcal{I}} w_i(k)$.
\begin{figure}[htb]
	\begin{center}
		\begin{overpic}[width=.35\textwidth]{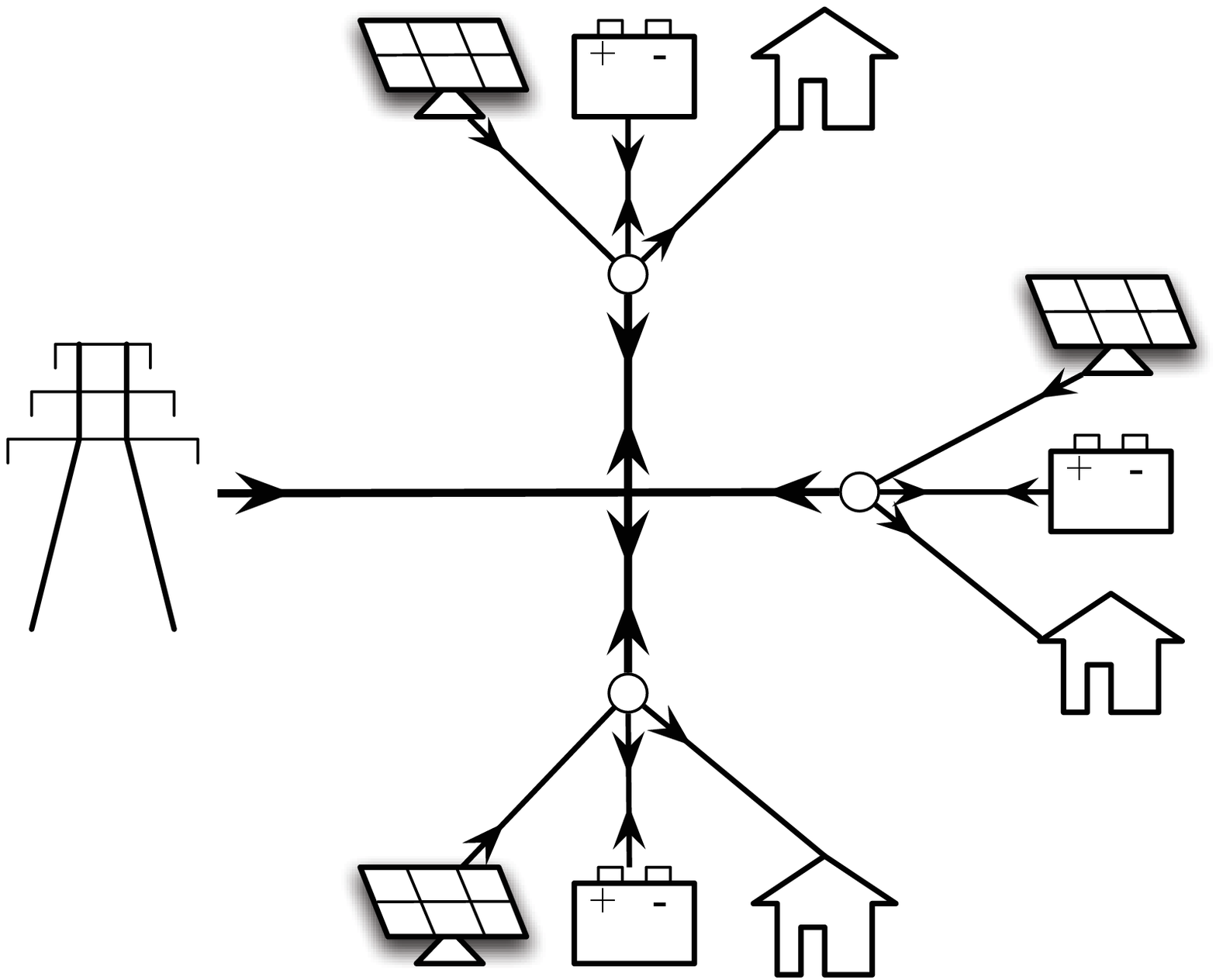}%
			\put(4,23){{\bf \scriptsize C.E.}}%
			\put(48.5,80){\scriptsize RES$_1$}%
			\put(83,60){{\scriptsize RES$_2$}}%
			\put(77,9){\scriptsize \begin{sideways}$\ddots$\end{sideways}}%
			\put(72.5,0.5){\scriptsize RES$_\mathcal{I}$}%
		\end{overpic}\hspace*{0.2\textwidth}%
		\includegraphics[scale=0.85]{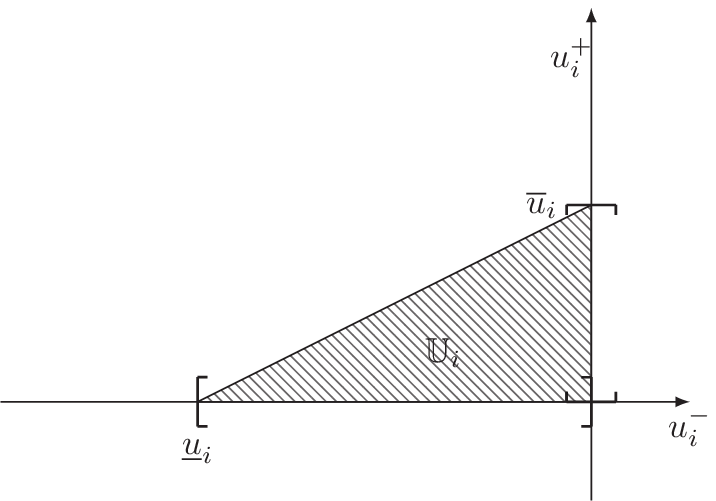}
	\caption{Schematic representation of a network of Residential Energy Systems (left) and the set of feasible control values (right).}
	\label{fig:u-constraints}
	\end{center}
\end{figure}

\subsection{First Objective: Peak Shaving}

The Central Entity is located at the common point of coupling, where the aggregated sum $\sum_{i=1}^{\mathcal{I}} z_i(k)$ is collected. A positive sign corresponds to a power demand while a negative sign indicates a surplus, which reflects that the aggregated local generation exceeds the aggregated local load at time instant~$k$. %, $k \in \mathbb{N}_0$. 
Hence, the CE requires balancing energy to match the aggregated power demand. While that is comparatively easy for a constant power demand/supply, it is significantly more demanding %(and more expensive) 
if the power demand exhibits large fluctuations. Hence, a typical goal is to flatten the energy demand %/supply 
of the network in consideration.

For a mathematical formulation, we first compose an overall system by setting up the system dynamics
$$
	x(k+1) = \begin{pmatrix}
		x_1(k+1) \\ x_2(k+1) \\ \vdots \\ x_{\mathcal{I}}(k+1)
	\end{pmatrix} = \begin{pmatrix}
		f_1(x_1(k),u_1(k)) \\
		f_2(x(_2(k),u_2(k))\\
		\vdots\\
		f_{\mathcal{I}}(x_{\mathcal{I}}(k),u_{\mathcal{I}}(k))
	\end{pmatrix} = f(x(k),u(k)),
$$
the constrains $x(k) \in \mathbb{X} := \mathbb{X}_1 \times \ldots \times \mathbb{X}_{\mathcal{I}} \subset \mathbb{R}^\mathcal{I}$ and $u(k) \in \mathbb{U} := \mathbb{U}_1 \times \ldots \times \mathbb{U}_{\mathcal{I}} \subset \mathbb{R}^{2\mathcal{I}}$. Hence, for given data
\begin{align}
	w(n) = (w_1(n), w_2(n), \ldots, w_{\mathcal{I}}(n))^\top, \qquad n = 0,1,2,\ldots,k, \label{NotationLoadData}
\end{align}
the power demand at time $k \geq 0$ resulting from a discharging/charging behaviour $(u(n))_{n=0}^{k} = ((u^-(n),u^+(n)))_{n=0}^{k} \in \mathbb{U}^{k+1}$ and initial state of charge $x^0 \in \mathbb{X}$ is
\begin{align}
	z(k) = z(k;(u(n))_{n=0}^{k},(w(n))_{n=0}^k,x^0). \nonumber
\end{align}
Here, we emphasize %like to point out 
that the output depends on the time varying load and generation data given by \eqref{NotationLoadData}, which has to be predicted for load shaping, see, e.g.\ \cite{BaliGaur15,KhanMahm16} for load forecasting, \cite{MakaEtin11,WanXu14} and \cite{GolePier16,AntoOsor16} for predictions on the energy generation due to wind and solar power, resp. Hence, at time instant $k$, %\in \mathbb{N}_0$, 
we optimize over a finite time horizon of $N$, $N \in \mathbb{N}_{\geq 2}$, time steps. Moreover, we require suitable (time varying) reference values, which are constructed based on the overall average net consumption, i.e.
\begin{align}\label{NotationReferenceValues}
	\bar{\zeta}(n) = \frac {1}{\mathcal{I} \cdot \min\{N,n+1\}} \sum_{j = n - \min\{n,N-1\}}^n \sum_{i=1}^{\mathcal{I}} w_i(j).
\end{align}
Note that $\bar{\zeta}(n)$ is the average over the previous $N$ time steps if the data history is sufficiently long, which is reflected by taking the minimum. Then, for given data~$w(n)$, $n = \max\{0,k-(N-1)\}, \ldots, k, \ldots, k+N-1$, and state~$x(k)$, $x(k) \in \mathbb{R}^{\mathcal{I}}$, the CE wants to minimize the objective function $J_1:\mathbb{U}^{N} \to \mathbb{R}_{\geq0}$,
\begin{align*}
	\mathbf{u} = (u(n))_{n=k}^{k+N-1} \quad\mapsto\quad \frac {1}{N} \sum_{n=k}^{k+N-1} \left( \frac {1}{\mathcal{I}} \sum_{i=1}^{\mathcal{I}} \left[w_i(n)+u_i^+(n)+\gamma_iu_i^-(n)\right]-\bar{\zeta}(n)\right)^2,
\end{align*}
subject to the system dynamics~\eqref{eq:RES} and the state constraint $x_i(n) \in \mathbb{X}_i$ for all $n \in [k:k+N]$ and $i \in [1:\mathcal{I}]$, which penalizes the deviation of the average power demand from the sequence of reference values given by the vector $\bar{\zeta} = (\bar{\zeta}(k),\ldots,\bar{\zeta}(k+N-1))^\top$. Here, we call $N$ the optimization horizon, on which it is assumed that the load and generation data can be reliably predicted.

The optimal control value of this optimization problem is unique and is attained as shown in \cite{BrauGrun16}. Hence, we can define a state feedback law by using the first element $u^\star(k)$ of an optimal control sequence $\mathbf{u}^\star = (u^\star(n))_{n=k}^{k+N-1}$, which is then implemented at the plant. Then, based on the new state $x(k+1)$ and the new input data (in particular $w(k+N)$ and $\bar{\zeta}(k+N)$), the procedure can be repeated ad infinitum to generate a closed loop in a receding horizon fashion as explicated in Algorithm~\ref{alg:MPC} solely based on $J_1$ (and, thus, ignoring the --~potentially conflicting~-- second objective~$J_2$, see, e.g.\ \cite{WortKell15} or \cite{GrunPann17} for a more detailed %general 
introduction to (distributed) MPC). %

Numerical results illustrating the outcome of such an MPC scheme are depicted in Figure~\ref{fig:peak_shaving}\hspace{-2mm}. 
We point out that the reference values \eqref{NotationReferenceValues} cannot be traced due to the constraints~\eqref{subeq:constraints}.
\begin{figure}[htb]
	\centering
	\includegraphics[width=0.95\textwidth]{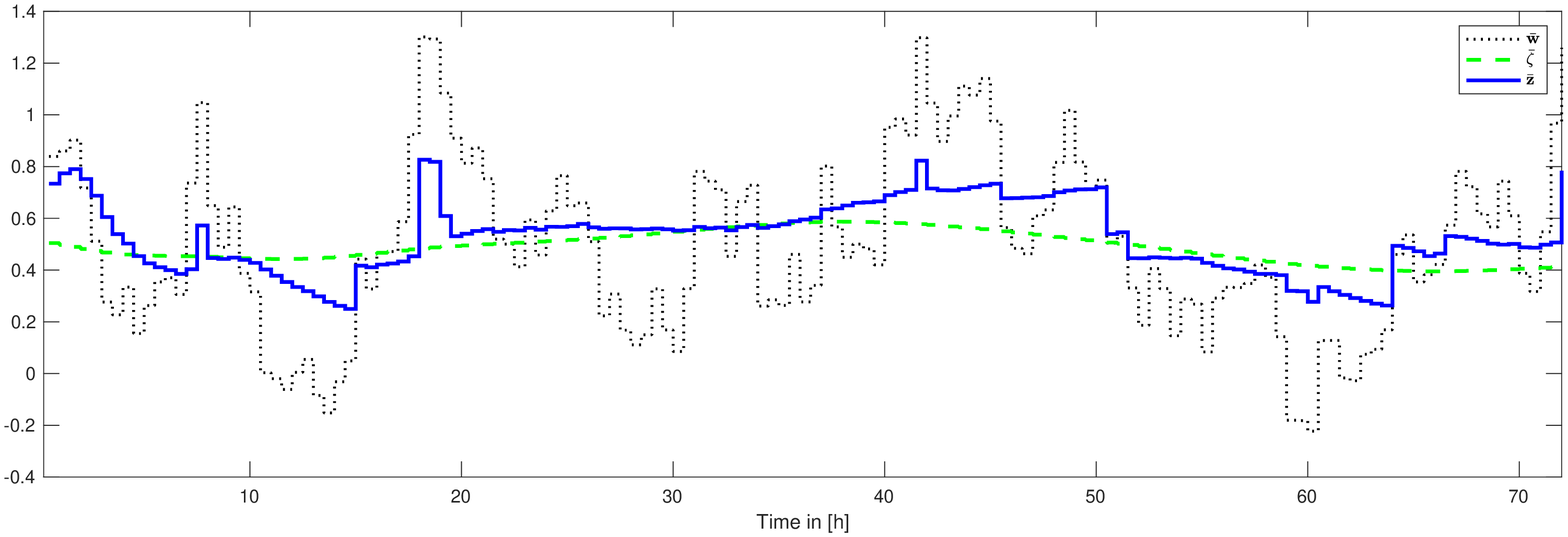}\\
	\includegraphics[width=0.95\textwidth]{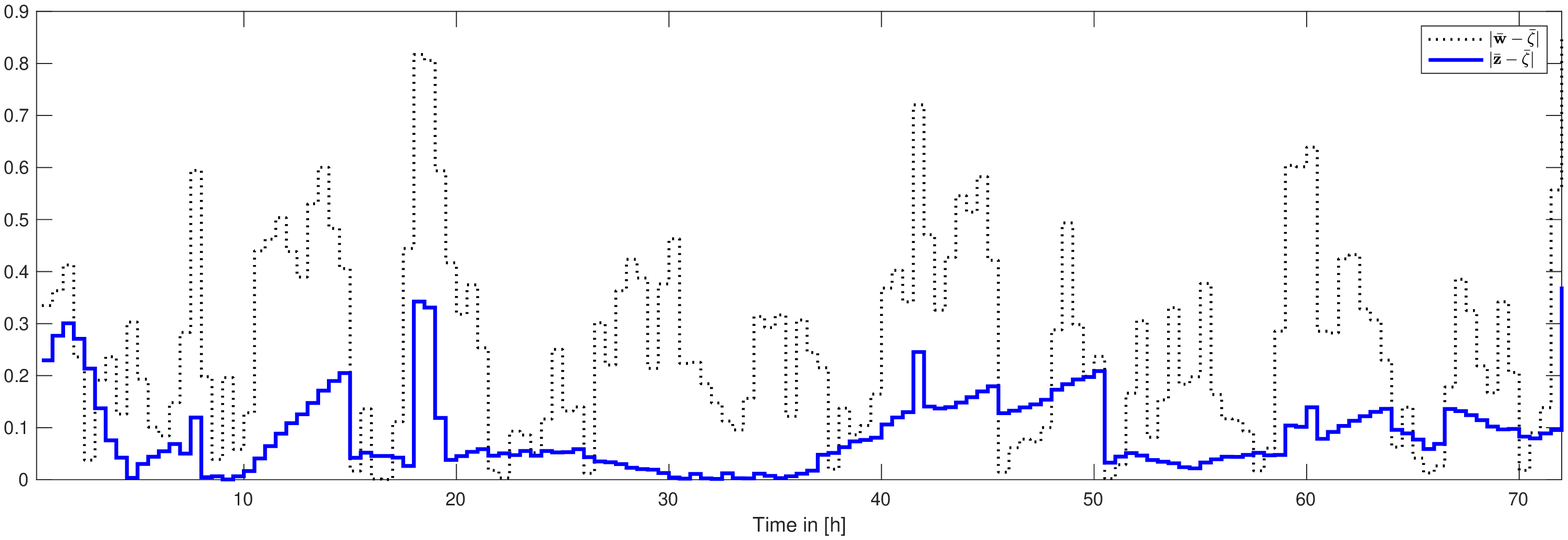}
	\caption{Average power demand $\frac 1 {\mathcal{I}} \sum_{i=1}^{\mathcal{I}} z_i(k)$ (top) and its (absolute) deviation from the reference $\bar{\zeta}(k)$ (bottom) of the MPC closed loop with prediction horizon $N = 48$ (one day) w.r.t.\ peak shaving in comparison to the setting without energy storage.}
	\label{fig:peak_shaving}
\end{figure}

\subsection{Second Objective: Flexibility via Tube Constraints}

Besides flattening the power demand profile, another goal of the grid operator may be to provide flexibility to the lower/higher layers of the overall system. 
The idea is to introduce time-varying tube constraints on the average power demand, i.e., we would like to achieve
\begin{align}\label{NotationTubeConstraints1}
	\ubar{c}(n) \quad \leq \quad \frac 1 {\mathcal{I}} \sum_{i=1}^{\mathcal{I}} z_i(n) \quad \leq \quad \bar{c}(n) \qquad\forall\,n \in [k:k+N-1]
\end{align}
for the lower and upper bounds $\ubar{\mathbf{c}}(n)$ and $\bar{\mathbf{c}}(n)$, $n \in [k:k+N-1]$. However, since the inclusion of the tube constraints~\eqref{NotationTubeConstraints1} may render the problem infeasible, we minimize the violation of the tube constraints by using the objective function $J_2: \mathbb{U}^{N} \rightarrow \mathbb{R}_{\geq 0}$,
\begin{align}\nonumber
%	\mathbf{u} = (u(n))_{n=k}^{k+N-1} \quad\mapsto\quad 
	\mathbf{u} \quad \mapsto \quad \sum_{n=k}^{k+N-1} \left[ \left( \max\Big\{0,\frac {1}{\mathcal{I}} \sum_{i=1}^{\mathcal{I}} z_i(n) - \bar{c}(n)\Big\} \right)^2 + %\sum_{n=k}^{k+N-1} 
	\left( \max\Big\{0, \ubar{c}(n) - \frac {1}{\mathcal{I}} \sum_{i=1}^{\mathcal{I}} z_i(n)\Big\} \right)^2 \right],
\end{align}
subject to the system dynamics~\eqref{eq:RES} and the state constraint $x_i(n) \in \mathbb{X}_i$
for all $n \in [k:k+N]$ and $i \in [1:\mathcal{I}]$. According to this construction a positive value of $J_2(\mathbf{u})$ reflects a violation of~\eqref{NotationTubeConstraints1} which causes a loss of flexibility in the system.

Note that this cost function exhibits some complementarity: for a given time index~$n$, strict positivity of one term implies that the other term is equal to zero, which reflects that a violation of Condition~\eqref{NotationTubeConstraints1} can only occur on one side. Moreover, for a given time index~$n$, the objective function is convex. Numerical solutions are displayed in Figure~\ref{fig:tube_constraints} to illustrate our modelling approach.
\begin{figure}[htb]
	\centering
	\includegraphics[width=0.95\textwidth]{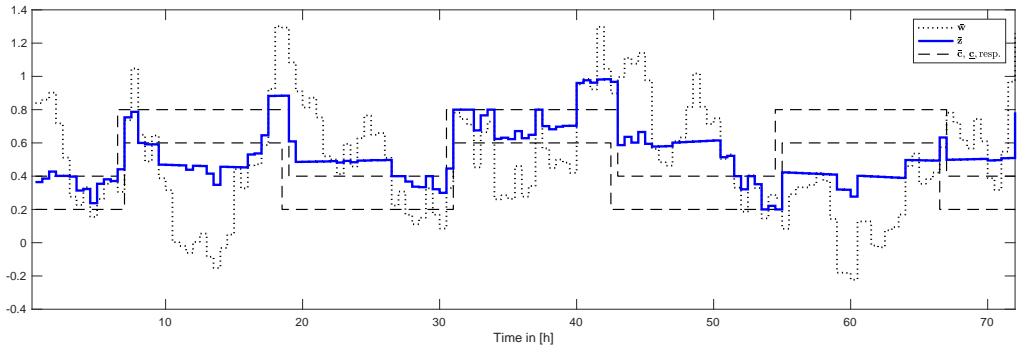}
	\caption{Evolution of the average power demand $\frac 1 {\mathcal{I}} \sum_{i=1}^{\mathcal{I}} z_i(k)$ 
	and the time varying tube constraints \eqref{NotationTubeConstraints}.}
	\label{fig:tube_constraints}
\end{figure}

\subsection{Multiobjective Optimization Problem and MPC Scheme}\label{subsec:MOPandMPC}

Combining the non-aligned (conflicting) objectives introduced in the preceeding subsections leads to the following multiobjective optimization problem
		\begin{align*}
			\min_{\mathbf{u} \in \mathbb{U}^N} \quad & \begin{pmatrix}
				J_1(\mathbf{u}) \\
				J_2(\mathbf{u})
			\end{pmatrix} \qquad\text{subject to \eqref{eq:RES} and $x_i(n) \in \mathbb{X}_i$ for all $(i,n) \in [1:\mathcal{I}] \times [k:k+N]$.}
		\end{align*}
For this kind of Multiobjective Optimization Problems (MOPs), see, e.g.\ \cite{Jahn04}, the following optimality concept is typically used.
\begin{definition}[Pareto optimality, Pareto frontier]\label{def:pareto}
	Consider the vector optimization problem
	\begin{equation}\label{OP}
		\min_{x\in\mathcal{M}}\quad f(x)
	\end{equation}
	with $f : X \to \mathbb{R}^m$, $\mathcal{M} \subseteq X \subseteq \mathbb{R}^n$. %
	A vector $x^\star \in \mathcal{M}$ is called \emph{(Pareto) optimal} or \emph{(Pareto) efficient} for the optimization problem~\eqref{OP} %
	if for all $x\in\mathcal{M}$ and $i\in\{1,\ldots,m\}$ the 	following holds:
	$$
		f_i(x)<f_i(x^\star) \quad\Longrightarrow\quad \exists\,j\in\{1,\ldots,m\}: f_j(x^\star)<f_j(x),
	$$
	where $f_i : X \to \mathbb{R}$ denotes the $i$-th component of $f$, $i \in [1:m]$.
	Furthermore, a point $x^\star \in \mathcal{M}$ is called \emph{weakly (Pareto) optimal} or \emph{weakly (Pareto) efficient} of~\eqref{OP} if there is no $x \in \mathcal{M}$ with $f_i(x) < f_i(x^\star)$ for all $i \in [1:m]$. 
	The set $\Set{f(x) | x \text{ is efficient}}$ is called \emph{Pareto frontier}.
\end{definition}

\begin{remark}
Clearly, efficiency yields weak efficiency.
\end{remark}

Before we proceed, we reformulate the MOP described above in order to facilitate its solution in Section~\ref{SectionScalarizationAndADMM}. To this end, we begin with the objective function~$J_2$. Here, we introduce the stacked auxiliary variable $\mathbf{s}=(\underline{\mathbf{s}}^\top,\bar{\mathbf{s}}^\top)^\top \in \mathbb{R}_{\geq0}^{2N}$ with
\begin{align}\nonumber
	\underline{\mathbf{s}} = (\underline{s}(k),\underline{s}(k+1),\ldots,\underline{s}(k+N-1))^\top \qquad\text{ and }\qquad \bar{\mathbf{s}} = (\bar{s}(k),\bar{s}(k+1),\ldots,\bar{s}(k+N-1))^\top,
\end{align}
which was already used in \cite{BrauFaul18}. Then, we may introduce the relaxed constraints
\begin{align}\label{NotationTubeConstraints}
	\ubar{c}(n) - \ubar{s}(n) \quad\leq\quad \frac 1 {\mathcal{I}} \sum_{i=1}^{\mathcal{I}} z_i(n) \quad\leq\quad \bar{c}(n) + \bar{s}(n) \qquad\text{ for all $n \in [k:k+N-1]$}
\end{align}
and minimize the Euclidean norm of the vector $\mathbf{s}$. To this end, we introduce the cost function $h : \mathbb{R}^{2N}_{\geq 0} \to \mathbb{R}_{\geq 0}$,
\begin{align}\nonumber
	\mathbf{s} \quad\mapsto\quad \norm{\mathbf{s}}_2^2 = \sum_{n=k}^{k+N-1} \underline{s}(n)^2 + \bar{s}(n)^2.
\end{align}
In summary, the additional signed (slack) variables, which are stacked in the vector~$\mathbf{s}$, allow to represent the optimization objective~$J_2$ by the strictly convex function~$h$ at the expense of incorporating the constraints given by~\eqref{NotationTubeConstraints}. Doing so ensures feasibility of the optimization problem, e.g.\ by not using the storage devices at all. If desired from an implementational point of view, the complementarity constraint $\underline{s}(n) \bar{s}(n) = 0$, $n \in [k:k+N-1]$, may be included while both feasibility and the value of the objective function are maintained.

Furthermore, we introduce the auxiliary variable $\bar{\mathbf{z}} = (\bar{z}(k),\ldots,\bar{z}(k+N-1))^\top \in \mathbb{R}^N$ and the additional equality constraints
\begin{align}\label{NotationZbar}
	\bar{z}(n) = \frac 1 {\mathcal{I}} \sum_{i=1}^{\mathcal{I}} z_i(n) \qquad\text{ for all $n \in [k:k+N-1]$}.
\end{align}
Using $\bar{\mathbf{z}}$ and the constraint~\eqref{NotationZbar} allows us to replace $J_1$ by the function $g: \mathbb{R}^N \rightarrow \mathbb{R}_{\geq 0}$,
\begin{align}\nonumber
	\bar{\mathbf{z}} \quad\mapsto\quad \frac 1N \sum_{n=k}^{k+N-1} (\bar{z}(n) - \bar{\zeta}(n))^2 = \frac 1N \norm{\bar{\mathbf{z}} - \bar{\zeta}}_2^2.
\end{align}
Next, we want to rephrase the constraint set such that the \textit{output} variables $\mathbf{z}_i = (z_i(k),z_i(k+1),\ldots,z_i(k+N-1))^\top$, $i \in [1:\mathcal{I}]$, can be used as an optimization variable instead of $\mathbf{u}$. To this end, we define the set $\mathbb{D} = \mathbb{D}_1 \times \ldots \times \mathbb{D}_{\mathcal{I}}$ with
\begin{align}\nonumber
	\mathbb{D}_i & = \mathbb{D}_i(x_i(k),(w_i(n))_{n=k}^{k+N-1})  \\
	& := \left\{ 
		\mathbf{z}_i \in \mathbb{R}^N \left| \exists\, \mathbf{u} \in \mathbb{U}^N: \begin{matrix}
			x_i(n+1) = \alpha_i x_i(n) + T(\beta_i u_i^+(n) + u_i^-(n)) \in [0,C_i],\\
			z_i(n) = w_i(n) + u_i^+(n) + \gamma_i u_i^-(n),\,n \in [k:k+N-1]\\
		\end{matrix} \right. \right\} \nonumber
\end{align}
with $x_i(k) \in [0,C_i]$ for all $i \in [1:\mathcal{I}]$. Note that, for all $i \in [1:\mathcal{I}]$, the set $\mathbb{D}_i$ and, thus, also the set $\mathbb{D}$ is convex and compact in view of the linearity of the constraints~\eqref{eq:RES} and~\eqref{subeq:constraints}. 

Then, plugging $\bar{z}$ into the constraint~\eqref{NotationTubeConstraints} yields the following MOP
\begin{equation}\label{MOP}\tag{MOP}\fbox{\centering
	\begin{minipage}{0.6\textwidth}
		\begin{align*}
			\min_{(\bar{\mathbf{z}}, \mathbf{s})} \quad & \begin{pmatrix}
				g(\bar{\mathbf{z}}) \\
				h(\mathbf{s})
			\end{pmatrix}\\
			\mathrm{subject \; to} \quad &(\bar{\mathbf{z}}, \mathbf{s}) \in \mathbb{S}=\Set{(\bar{\mathbf{z}},\mathbf{s}) \in \mathbb{R}^N \times \mathbb{R}^{2N}_{\geq0}| \left( \begin{array}{r}
				I \\-I
			\end{array} \right)
			\bar{\mathbf{z}}-\mathbf{s}\leq \left( \begin{array}{r}
				\bar{\mathbf{c}} \\ -\underline{\mathbf{c}}
			\end{array} \right) }\\
			& \bar{\mathbf{z}} : \exists\, (\mathbf{z}_1^\top, \ldots, \mathbf{z}_{\mathcal{I}}^\top)^\top \in \mathbb{D} \text{ satisfying \eqref{NotationZbar},}
		\end{align*}
	\end{minipage}}
\end{equation}
where the set $\mathbb{S}$ is convex. In \eqref{MOP} the optimization variables and, thus, the objectives are coupled via the constraint~$\mathbb{S}$. If, in addition, also individual objectives of the residential energy systems have to be taken into account, the vectors $\mathbf{z}_1,\ldots,\mathbf{z}_{\mathcal{I}}$ have to be used as optimization variables.

Finally, 
we explicate the Model Predictive Control (MPC) scheme to be applied.
\begin{algorithm}\caption{Model Predictive Control (MPC)}\label{alg:MPC}
	{\bf Input:} Prediction horizon~$N$. Set time $k = 0$.\\
	{\bf Repeat:}
		\begin{enumerate}
			\item [1.] Update the forecast data $w(n) \in \mathbb{R}^{\mathcal{I}}$, $n \in [\max\{k-N+1,0\} : k+N-1]$, compute $\bar{\zeta} = (\bar{\zeta}(k), \ldots, \bar{\zeta}(k+N-1))^\top$ and measure the current SOC $x_i(k)$, $i \in [1:\mathcal{I}]$.
			\item [2.] Determine an efficient point $(\bar{\mathbf{z}}^\star,\mathbf{s}^\star)$ and a corresponding input~$\mathbf{u}^\star$ such that $J_1(\mathbf{u}^\star) = g(\bar{\mathbf{z}}^\star)$ and $J_2(\mathbf{u}^\star) = h(\mathbf{s}^\star)$ hold.
			\item [3.] Implement $u^\star(k)$ at the plant, and increment the time index~$k$.
		\end{enumerate}
\end{algorithm}
In general, it is quite difficult to provide a meaningful analysis of the MPC closed loop resulting from a multiobjective problem formulation. However, adding a few additional assumptions allows to provide some guarantees if a stabilization task is considered, see, e.g.~\cite{GrunStie17}. In our analysis, however, we restrict ourselves to the analysis of the \textit{static} MOP to be solved in Step~2 of Algorithm~\ref{alg:MPC} and provide numerical simulations to investigate the MPC closed loop.

\section{Scalarization and Numerical Solution via ADMM}\label{SectionScalarizationAndADMM}

In the following, we consider the Scalarized Multiobjective Optimization Problem (SMOP) using the scalarization parameter~$\kappa$, $\kappa \in [0,1]$. For details on the scalarization of MOPs, we refer to \cite{Eich08}. Overall, we obtain the following SMOP.
\begin{equation}\label{SMOP}\tag{SMOP}\fbox{\centering
	\begin{minipage}{0.6\textwidth}
		\begin{align*}
			\min_{(\bar{\mathbf{z}},\mathbf{s})} \quad & \kappa g(\bar{\mathbf{z}}) + (1-\kappa) h(\mathbf{s}) \\
			\mathrm{subject \; to} \quad &(\bar{\mathbf{z}},\mathbf{s})\in\mathbb{S}=\Set{(\bar{\mathbf{z}},\mathbf{s}) \in \mathbb{R}^N \times \mathbb{R}^{2N}_{\geq0}|\begin{pmatrix}
				I\\-I
			\end{pmatrix}
			\bar{\mathbf{z}}-\mathbf{s}\leq\begin{pmatrix}
				\bar{\mathbf{c}}\\-\underline{\mathbf{c}}
			\end{pmatrix}}\\
			& \bar{\mathbf{z}} : \exists\, (\mathbf{z}_1^\top,\ldots,\mathbf{z}_{\mathcal{I}}^\top)^\top \in \mathbb{D} \text{ satisfying \eqref{NotationZbar}}
		\end{align*}
	\end{minipage}}
\end{equation}

In this section, we firstly present a numerical case study. Then, we briefly recall ADMM before we apply this optimization technique to solve~\eqref{SMOP}. ADMM is an algorithm for distributed optimization of convex, large-scale systems, which allows to solve~\eqref{SMOP} even for a very large number~$\mathcal{I}$ of subsystems. %
To this end, we exploit the reformulation of the original \eqref{MOP}. We show that the proposed algorithm yields, for each $\kappa \in [0,1]$, not only the optimal value of the scalarized problem but that the corresponding values of the components given by $g$ and $h$ are also unique. Then, based on the results derived in this section, we investigate the connection of the parameter~$\kappa$ to the Pareto frontier in the subsequent section.

\subsection{Numerical Simulations}

Increasing the value $\kappa\in[0,1]$ shifts the focus from penalizing the violation of the tube constraints to flatten the aggregated power demand profile. In Figure~\ref{fig:closedLoop} the corresponding closed-loop performance at time instant $k=0$ is visualized. Here, we set $\mathcal{I} = 10$, $T = 0.5$ (time step of $30$ minutes), $N = 48$ (one day prediction horizon), $x_i(0) = 0.5$, $C_i = 2$ (battery capacity), and $-\underline{u}_i = \bar{u}_i = 0.5$ (maximal discharging/charging rate) and vary the tube constraints from $(\underline{\mathbf{c}}_1,\bar{\mathbf{c}}_1)=(0.2,0.4)$ to $(\underline{\mathbf{c}}_2,\bar{\mathbf{c}}_2)=(0.6,0.8)$ on a time window of three days. %
Note that the choice of the tube constraints seems inappropriate since it is not aligned with the uncontrolled power demand profile (dotted black line in Figure~\ref{fig:closedLoop}). However, doing so, allows us to ensure that the objectives are potentially conflicting goals. Since weather forecasting is not topic of this paper we use real world data on load and generation provided by an Australian distribution network for the prediction of the future net consumption $w_i$, $i \in [1:\mathcal{I}]$. The choice of $T$ corresponds to this data. In~\cite{BrauFaul18} the authors chose a different cost function $g$ and simply summed the objectives, which complies with a weighting parameter $\kappa = 0.5$.

The corresponding open-loop perfomance, i.e. implementing $u^\star = (u^\star(k)^\top,\ldots,u^\star(k+N-1)^\top)^\top$ instead of $u^\star(k)$, can be viewd in Figure~\ref{fig:pareto} (top right).
\begin{figure}[htb]
	\centering
	\includegraphics[width=0.95\textwidth]{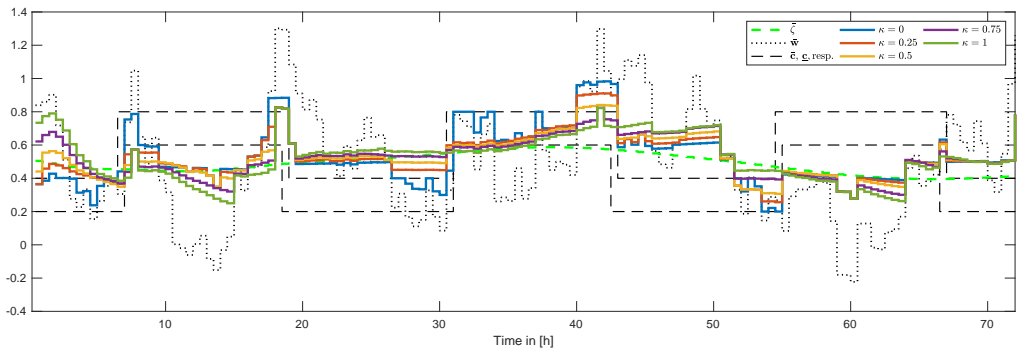}
	\caption{Impact of choice of weighting parameter $\kappa \in [0,1]$ on closed-loop performance.}
	\label{fig:closedLoop}
\end{figure}

\subsection{Alternating Direction Method of Multipliers (ADMM)}

There are two natural approaches to solve the optimization problem~\eqref{SMOP}. The first one is to calculate a centralized solution by optimizing the overall system at once. Here, a large rate of communication within the network is needed and the CE has to know all data of every prosumer, including possible charging rates, battery capacity, and current SOC, in detail. The second one renounces the communication by optimizing the behaviour of each prosumer separately. In this case the overall optimum cannot be guaranteed, see, e.g.\ \cite{WortKell15}. A remedy is given by distributed optimization algortihms like dual decomposition or ADMM, see, e.g.\ \cite{BrauSaue19} for further details.

ADMM is an algorithm used to solve large-scale optimization problems of the form
\begin{align*}
	\min_{(\mathbf{z},\mathbf{s})} \quad & \sum_{i=1}^{\mathcal{I}} f_i(\mathbf{z}_i)+g(\bar{\mathbf{z}}) + h(\mathbf{s}) \\
	\mathrm{subject \; to} \quad & (\bar{\mathbf{z}}, \mathbf{s}) \in \mathbb{S} \\
	& \bar{\mathbf{z}} = \frac{1}{\mathcal{I}} \sum_{i=1}^{\mathcal{I}} \mathbf{z}_i\\
	& \mathbf{z}_i \in \mathbb{D}_i \, \forall \, i \in [1:\mathcal{I}]
\end{align*}
in a distributed fashion. Here, the extended functions $f_i: D \to \mathbb{R} \cup \{ \pm \infty \}$, $i \in [1:\mathcal{I}]$, $g: D \to \mathbb{R} \cup \{ \pm \infty \}$, and $h: D \to \mathbb{R} \cup \{ \pm \infty \}$, defined on the closed and convex domain~$D$ are supposed to be proper, i.e.\ the effective domain $\Set{x \in D | f(x) \in \mathbb{R}}$ is nonempty and there does not exist $x \in D$ such that $f(x)=-\infty$ holds (the definition analogously holds for $g$ and $h$). 
\begin{remark}
	The functions $f_i$, $i \in [1:\mathcal{I}]$, may be used to incorporate individual goal of the subsystems in the optimization, see \cite{BrauFaul18} for a detailed explanation.
\end{remark}

Next, we outline the ADMM algorithm used in the following.
\begin{algorithm}
	\caption{ADMM}\label{alg:ADMM}
	{\bf Input:} $\mathbf{z}_i^0 \in \mathbb{R}^N$, $i \in [1:\mathcal{I}]$, $\Pi^0 \in \mathbb{R}^N$, and $\ell=0$.\\
	{\bf Repeat:}
		\begin{enumerate}
			\item[1.] {\bf Update subsystems}: For each $i\in[1:\mathcal{I}]$, compute
				$$
					\mathbf{z}_i^{\ell+1} \quad \in \quad \argmin_{\mathbf{z}_i \in \mathbb{D}_i} \quad \frac\rho2 \norm{\mathbf{z}_i - \mathbf{z}_i^\ell + \Pi^\ell}_2^2
				$$
				and send $\mathbf{z}_i^{\ell+1}$ to the CE.
			\item[2.] {\bf Update CE}: Calculate $\bar{\mathbf{z}}^{\ell+1} = \frac 1 {\mathcal{I}} \sum_{i=1}^{\mathcal{I}} \mathbf{z}_i^{\ell+1}$ and compute
				$$
					(\bar{\mathbf{a}}^{\ell+1}, \mathbf{s}^{\ell+1}) \quad \in \quad %
					\argmin_{(\bar{\mathbf{a}},\mathbf{s}) \in \mathbb{S}} \quad \kappa g(\bar{\mathbf{a}}) + (1-\kappa) h(\mathbf{s}) %
					+\frac{\rho \mathcal{I}}{2}\norm{\bar{\mathbf{z}}^{\ell+1} - \bar{\mathbf{a}} + \frac{\bar{\lambda}^\ell}{\rho}}^2_2,
				$$
				update the dual variable $\bar{\lambda}^{\ell+1} = \bar{\lambda}^\ell + \rho(\bar{\mathbf{z}}^{\ell+1} - \bar{\mathbf{a}}^{\ell+1})$, and broadcast $\Pi^{\ell+1}=\bar{\mathbf{z}}^{\ell+1} - \bar{\mathbf{a}}^{\ell+1} + \bar{\lambda}^{\ell+1} / \rho$.
		\end{enumerate}
\end{algorithm}

The great advantages of distributed optimization algorithms such as ADMM are scalability and plug-and-play capability. Since Step~1 in Algorithm~\ref{alg:ADMM} can be parallelized, the number $\mathcal{I}$ of subsystems does not affect the performance. Moreover, the specific data of the individual systems mentioned above is not known to the CE, which ensures plug-and-play capability.

\subsection{ADMM yields the Solution of the Scalarized Problem}

To apply ADMM in our framework to find efficient points of~\eqref{MOP}, we first scalarize the problem
%modify it according to Subsection~\ref{subsec:MOPandMPC} and, then, scalarize the multiobjective optimization problem 
in order to obtain~\eqref{SMOP}. %
As a second preliminary step, we introduce the auxiliary variables $\mathbf{a}_i = (a_i(k),a_i(k+1),\ldots,a_i(k+N-1))^\top \in \mathbb{R}^N$, $i \in [1:\mathcal{I}]$, and use the notation $\mathbf{a} = (\mathbf{a}_1^\top \ldots \mathbf{a}_{\mathcal{I}}^\top)^\top \in \mathbb{R}^{\mathcal{I} N}$ to rewrite~\eqref{SMOP} as 
\begin{equation}\label{SMOP2}\tag{SMOP2}
	\fbox{\centering
		\begin{minipage}{0.6\textwidth}
			\begin{align*}
				\min_{(\mathbf{a},\mathbf{s})}\quad &f_\kappa(\bar{\mathbf{a}},\mathbf{s}):=\kappa g(\bar{\mathbf{a}})+(1-\kappa)h(\mathbf{s})=\kappa\norm{\bar{\mathbf{a}}-\bar{\zeta}}_2^2+(1-\kappa)\norm{\mathbf{s}}_2^2\\
				\mathrm{subject \; to} \quad & \mathbf{z}_i-\mathbf{a}_i=0 \text{, } \mathbf{z}_i\in\mathbb{D}_i \quad\forall\,i \in [1:\mathcal{I}] \quad\text{ and } \\
				&  (\bar{\mathbf{a}},\mathbf{s})\in\mathbb{S} \quad\text{ with }\quad \bar{\mathbf{a}} = \frac 1 {\mathcal{I}} \sum_{i=1}^{\mathcal{I}} \mathbf{a}_i.
			\end{align*}
	\end{minipage}}
\end{equation}

In ADMM, each prosumer first minimizes the augmented Lagrangian $\mathcal{L}_\rho:\mathbb{R}^{\mathcal{I} N} \times \mathbb{R}^{\mathcal{I} N} \times \mathbb{R}_{\geq0}^{2N} \times \mathbb{R}^{\mathcal{I} N} \to \mathbb{R}$
\begin{align}\label{NotationLagrangian}
	\mathcal{L}_\rho (\mathbf{z}, \mathbf{a}, \mathbf{s}, \lambda) = 
	\kappa g(\bar{\mathbf{a}}) + (1-\kappa) h(\mathbf{s}) + \lambda^\top(\mathbf{z} - \mathbf{a}) + \frac{\rho \mathcal{I}}{2} \norm{\bar{\mathbf{z}}^{\ell+1} - \bar{\mathbf{a}} + \frac{\bar{\lambda}}{\rho}}^2_2
\end{align}
w.r.t.\ its own power demand for some $\rho>0$. Then, the CE minimizes $\mathcal{L}_\rho$ w.r.t.\ the aggregated power demand and the auxiliary variable $\mathbf{s}$ based on the aggregated optimal power demand provided by the prosumers.

The following theorem recalls convergence properties of ADMM, see \cite[Theorem 3.1 and Subsection 3.2]{BrauFaul18}.

\begin{theorem}[Convergence of ADMM]\label{thm:ADDM}
	Let the augmented Lagrangian be defined by~\eqref{NotationLagrangian}. Then, if there exists a saddle point of the unaugmented Lagrangian $\mathcal{L}_0$, i.e. there exists some $(\mathbf{z}^\star,\mathbf{a}^\star,\mathbf{s}^\star,\lambda^\star)$ such that
$$
	\mathcal{L}_0(\mathbf{z}^\star,\mathbf{a}^\star,\mathbf{s}^\star,\lambda)\quad
	\leq\quad \mathcal{L}_0(\mathbf{z}^\star,\mathbf{a}^\star,\mathbf{s}^\star,\lambda^\star)\quad
	\leq\quad\mathcal{L}_0(\mathbf{z},\mathbf{a},\mathbf{s},\lambda^\star)
	\qquad \forall\, (\mathbf{z},\mathbf{a},\mathbf{s}) \text{ and } \lambda,
$$
	ADMM converges for all $\mathbf{z}^0 \in \mathbb{R}^{\mathcal{I} N}$ and $\Pi^0 \in \mathbb{R}^N$ in the following sense:
	\begin{enumerate}
		\item The sequence $(\mathbf{z}^\ell - \mathbf{a}^\ell)_{\ell \in \mathbb{N}}$ converges to zero.
		\item The sequence $\left((\kappa g(\bar{\mathbf{a}}^\ell) + (1-\kappa)h(\mathbf{s}^\ell)\right)_{\ell \in \mathbb{N}}$ converges to the unique minimum of~\eqref{SMOP2}.
		\item The dual variable $\lambda^\ell$ converges to the optimal dual point $\lambda^*$.
	\end{enumerate}
\end{theorem}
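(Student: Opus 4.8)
Since Theorem~\ref{thm:ADDM} restates a standard convergence result for two-block ADMM, the plan is to recast \eqref{SMOP2} into the canonical splitting form and then run the classical Lyapunov argument, which the cited \cite{BrauFaul18} adapts to the present setting. First I would rewrite \eqref{SMOP2} as $\min\, F(\mathbf{z}) + G(\bar{\mathbf{a}},\mathbf{s})$ subject to the linear coupling $\mathbf{z} - \mathbf{a} = 0$, where $F(\mathbf{z}) = \sum_{i=1}^{\mathcal{I}} f_i(\mathbf{z}_i)$ augmented by the convex indicator functions of the sets $\mathbb{D}_i$, and $G(\bar{\mathbf{a}},\mathbf{s}) = \kappa g(\bar{\mathbf{a}}) + (1-\kappa)h(\mathbf{s})$ augmented by the indicator of $\mathbb{S}$. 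Since each $\mathbb{D}_i$ and $\mathbb{S}$ is convex (and closed/compact as already noted) and since $g$, $h$ are convex, both $F$ and $G$ are closed, proper and convex; the averaging $\bar{\mathbf{a}} = \frac{1}{\mathcal{I}}\sum_i \mathbf{a}_i$ is merely a fixed linear operator, so this is exactly the hypothesis set under which two-block ADMM converges. This step also records that the two minimizations in Algorithm~\ref{alg:ADMM} are precisely the $\mathbf{z}$- and $(\bar{\mathbf{a}},\mathbf{s})$-updates of the augmented Lagrangian \eqref{NotationLagrangian}.

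Next I would extract the first-order optimality conditions of the two subproblems, written as variational inequalities in terms of subgradients of $F$ and $G$ and of the updated multiplier $\lambda^{\ell+1}$. Combining these with the saddle-point inequalities for $\mathcal{L}_0$ at $(\mathbf{z}^\star,\mathbf{a}^\star,\mathbf{s}^\star,\lambda^\star)$ yields, after rearrangement, the key descent estimate for the Lyapunov function
\[
  V^\ell := \tfrac{1}{\rho}\norm{\lambda^\ell - \lambda^\star}_2^2 + \rho\,\norm{\mathbf{a}^\ell - \mathbf{a}^\star}_2^2,
\]
namely
\[
  V^{\ell+1} \;\le\; V^\ell - \rho\,\norm{\mathbf{z}^{\ell+1}-\mathbf{a}^{\ell+1}}_2^2 - \rho\,\norm{\mathbf{a}^{\ell+1}-\mathbf{a}^\ell}_2^2.
\]
This is the analytic heart of the argument and, I expect, the main obstacle: one must carefully handle the cross terms generated by the primal residual $\mathbf{z}^{\ell+1}-\mathbf{a}^{\ell+1}$ and the consecutive difference $\mathbf{a}^{\ell+1}-\mathbf{a}^\ell$ so that they telescope cleanly, and one must track the block-averaging operator consistently through the quadratic penalty term of \eqref{NotationLagrangian}.

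From the descent inequality the three claims then follow routinely. Since $V^\ell \ge 0$ is non-increasing it converges, and telescoping the inequality shows $\sum_{\ell} \norm{\mathbf{z}^{\ell+1}-\mathbf{a}^{\ell+1}}_2^2 < \infty$, whence the primal residual tends to zero, giving claim~(i). Boundedness of $V^\ell$ also bounds the dual sequence $(\lambda^\ell)$, and re-using the combined inequalities sandwiches the objective gap $\kappa g(\bar{\mathbf{a}}^\ell) + (1-\kappa)h(\mathbf{s}^\ell) - p^\star$ between two quantities that both vanish as $\ell \to \infty$, where $p^\star$ denotes the (unique) optimal value of \eqref{SMOP2}; this yields claim~(ii), with uniqueness of the limiting $(\bar{\mathbf{a}},\mathbf{s})$-component an additional consequence of the strict convexity of $g$ and $h$. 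For claim~(iii), the vanishing residual together with the variational inequalities forces every limit point of $(\lambda^\ell)$ to satisfy the optimality conditions of the dual problem; combined with the monotonicity of $V^\ell$ this pins down a single limit, so $\lambda^\ell \to \lambda^\star$.
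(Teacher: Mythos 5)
The paper does not actually prove this theorem: it is recalled from \cite[Theorem~3.1 and Subsection~3.2]{BrauFaul18}, which in turn rests on the classical saddle-point/Lyapunov argument for two-block ADMM that you sketch (canonical splitting, the descent estimate for $V^\ell$, telescoping for the residual, the sandwich for the objective gap, and Fej\'er monotonicity for the dual iterates), so your route is essentially the intended one and the individual steps are sound. The one point you flag but do not carry out deserves emphasis: Algorithm~\ref{alg:ADMM} is the \emph{sharing-problem} variant of ADMM --- the $\mathbf{z}_i$-update is a pure projection of $\mathbf{z}_i^\ell-\Pi^\ell$ onto $\mathbb{D}_i$ and the quadratic penalty in \eqref{NotationLagrangian} involves only the averages $\bar{\mathbf{z}},\bar{\mathbf{a}},\bar{\lambda}$ --- so before the canonical descent inequality applies one must check that the $\mathbf{a}$-minimization decouples across $i$ and that all individual dual variables coincide, which is what reduces the scheme to the standard two-block form; this is a known reduction and does not invalidate your argument, but it is exactly the bookkeeping hidden in your phrase about ``tracking the block-averaging operator.''
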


Firstly, we show that ADMM indeed yields the desired values.
\begin{corollary}\label{cor:SMOP3_and_SMOP2}
	For all $\kappa\in[0,1]$ the optimal values of $g(\bar{\mathbf{a}})=\norm{\bar{\mathbf{a}}-\bar{\zeta}}_2^2$ and $h(\mathbf{s})=\norm{\mathbf{s}}_2^2$ in the optimization problem
	\begin{align}
		\min_{(\mathbf{a}, \mathbf{s})} \quad & %f_\kappa (\bar{\mathbf{a}}, \mathbf{s}) =
		\kappa \norm{\bar{\mathbf{a}} - \bar{\zeta}}_2^2 + (1-\kappa) \norm{\mathbf{s}}_2^2 + \frac{\rho\mathcal{I}}{2} \norm{\bar{\mathbf{z}}^{\ell+1} - \bar{\mathbf{a}} +\frac{\bar{\lambda}}{\rho}}^2_2 \label{NotationObjectiveFunctionADMM} \\
		\mathrm{subject \; to} \quad & (\bar{\mathbf{a}}, \mathbf{s}) \in \mathbb{S} \quad \text{ and }\quad \mathbf{z}_i - \mathbf{a}_i = 0, \mathbf{z}_i \in \mathbb{D}_i \quad \forall \, i\in [1:\mathcal{I}] \nonumber
	\end{align}
	are equal to those in~\eqref{SMOP2}.
\end{corollary}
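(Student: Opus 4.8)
The plan is to read \eqref{NotationObjectiveFunctionADMM} as the central-entity subproblem in Step~2 of Algorithm~\ref{alg:ADMM} and to upgrade the convergence statement of Theorem~\ref{thm:ADDM}, which concerns only the scalarized sum $\kappa g+(1-\kappa)h$, into a statement about the two components $g$ and $h$ individually. As a first step I would check that the hypothesis of Theorem~\ref{thm:ADDM} is met here: since $\mathbb{D}$ is convex and compact, $\mathbb{S}$ is convex, and $g,h$ are continuous and convex, the unaugmented Lagrangian $\mathcal{L}_0$ admits a saddle point, so Theorem~\ref{thm:ADDM} applies for every $\kappa\in[0,1]$. In particular it provides $\mathbf{z}^\ell-\mathbf{a}^\ell\to0$ and $\kappa g(\bar{\mathbf{a}}^\ell)+(1-\kappa)h(\mathbf{s}^\ell)\to m^\star$, where $m^\star$ denotes the unique optimal value of \eqref{SMOP2}.

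The decisive structural fact I would then isolate is that, for $\kappa\in(0,1)$, the optimal component values of \eqref{SMOP2} are themselves unique. Indeed, $f_\kappa(\bar{\mathbf{a}},\mathbf{s})=\kappa\norm{\bar{\mathbf{a}}-\bar{\zeta}}_2^2+(1-\kappa)\norm{\mathbf{s}}_2^2$ is a strictly positive combination of the strictly convex maps $\bar{\mathbf{a}}\mapsto\norm{\bar{\mathbf{a}}-\bar{\zeta}}_2^2$ and $\mathbf{s}\mapsto\norm{\mathbf{s}}_2^2$ acting on disjoint blocks of variables, hence it is jointly strictly convex in $(\bar{\mathbf{a}},\mathbf{s})$. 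Over the convex feasible set of \eqref{SMOP2} this forces a unique minimizer $(\bar{\mathbf{a}}^\star,\mathbf{s}^\star)$ and therefore unique component values $g^\star:=g(\bar{\mathbf{a}}^\star)$ and $h^\star:=h(\mathbf{s}^\star)$.

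To transfer convergence of the sum to convergence of the summands, I would argue by compactness. Using that $\bar{\mathbf{z}}^\ell$ lies in the compact set $\tfrac1{\mathcal{I}}\sum_i\mathbb{D}_i$, that $\mathbf{z}^\ell-\mathbf{a}^\ell\to0$, and that the term $(1-\kappa)\norm{\mathbf{s}^\ell}_2^2$ stays bounded along the convergent objective, the sequence $(\bar{\mathbf{a}}^\ell,\mathbf{s}^\ell)$ is bounded and each of its accumulation points $(\tilde{\mathbf{a}},\tilde{\mathbf{s}})$ is feasible for \eqref{SMOP2} (the defining constraints are closed, and consensus places the limiting $\tilde{\mathbf{a}}$ in the compact set $\tfrac1{\mathcal{I}}\sum_i\mathbb{D}_i$). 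By convergence of the scalarized objective such an accumulation point satisfies $\kappa\,g(\tilde{\mathbf{a}})+(1-\kappa)\,h(\tilde{\mathbf{s}})=m^\star$, and uniqueness of the minimizer forces $(\tilde{\mathbf{a}},\tilde{\mathbf{s}})=(\bar{\mathbf{a}}^\star,\mathbf{s}^\star)$. As the accumulation point is thus unique, the whole sequence converges, whence $g(\bar{\mathbf{a}}^\ell)\to g^\star$ and $h(\mathbf{s}^\ell)\to h^\star$; this is exactly the asserted equality of optimal component values.

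The step I expect to be the main obstacle is the two boundary cases $\kappa\in\{0,1\}$. There $f_\kappa$ is strictly convex in only one block, the joint minimizer of \eqref{SMOP2} is no longer unique, and convergence of the scalarized objective pins down only one of the two component values (for instance $g^\star$ when $\kappa=1$) while leaving the other a priori free. To close this gap I would invoke the complementarity already noted for the slack variable---for each time index at most one of $\underline{s}(n),\bar{s}(n)$ is active---which singles out the minimal-norm feasible slack and thereby fixes the remaining component value; making this selection argument rigorous, rather than the routine reasoning for the interior case $\kappa\in(0,1)$, is where the actual difficulty lies.
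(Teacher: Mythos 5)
Your argument goes after a different statement than the one the corollary makes, and in doing so it misses the single observation on which the paper's proof rests. The corollary compares two \emph{static} optimization problems: \eqref{SMOP2} and the augmented problem \eqref{NotationObjectiveFunctionADMM}, which differ exactly by the penalty term $\frac{\rho\mathcal{I}}{2}\norm{\bar{\mathbf{z}}^{\ell+1}-\bar{\mathbf{a}}+\bar{\lambda}/\rho}_2^2$. The paper's proof consists of noting that, by Theorem~\ref{thm:ADDM}~(i) and~(iii), at the fixed point of ADMM the primal residual vanishes (so $\bar{\mathbf{z}}^\star=\bar{\mathbf{a}}^\star$) and the multiplier has converged, whence the penalty term collapses to the constant $\frac{\mathcal{I}}{2\rho}\norm{\bar{\lambda}^\star}_2^2$; the two problems then differ only by an additive constant over the same feasible set and therefore have the same minimizers and the same optimal component values of $g$ and $h$ --- uniformly in $\kappa\in[0,1]$, with no case distinction. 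Your proposal never engages with this penalty term at all: you instead prove that the ADMM iterates $(\bar{\mathbf{a}}^\ell,\mathbf{s}^\ell)$ converge and that $g(\bar{\mathbf{a}}^\ell)\to g^\star$ and $h(\mathbf{s}^\ell)\to h^\star$. That is a legitimate (and, for $\kappa\in(0,1)$, essentially correct) strengthening of Theorem~\ref{thm:ADDM}~(ii) via strict convexity plus compactness, but it does not establish the asserted equality between the optimal component values of \eqref{NotationObjectiveFunctionADMM} and those of \eqref{SMOP2}, which is precisely what justifies the CE update in Algorithm~\ref{alg:ADMM}.

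The second genuine problem is your treatment of $\kappa\in\{0,1\}$. Complementarity of the slacks does not rescue you there: for $\kappa=1$ the term $(1-\kappa)h(\mathbf{s})$ vanishes identically, so nothing in either objective selects a minimal-norm slack and $h$ simply has no unique optimal value (this is exactly what Proposition~\ref{prop:uniqueness_of_minimizer_SOP} records, and why the paper later introduces the auxiliary problems \eqref{auxi_problem1} and \eqref{auxi_problem2}); symmetrically for $\kappa=0$ and $g$. Worse, for $\kappa=0$ the penalty term is the only place where $\bar{\mathbf{a}}$ enters the objective of \eqref{NotationObjectiveFunctionADMM}, so unless you first argue that this term is constant on the feasible set you cannot even exclude that the augmentation changes which $\bar{\mathbf{a}}$, and hence which $g$-values, are optimal. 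The constant-offset argument disposes of both boundary cases at once, since ``equality of optimal values'' then becomes an identity of the entire solution sets, whether or not they are singletons.
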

\begin{proof}
	Theorem~\ref{thm:ADDM}~(i) and~(iii) imply that the (optimal) penalty term 
	$$
		\frac{\rho\mathcal{I}}{2} \norm{\bar{\mathbf{z}}^\star - \bar{\mathbf{a}}^\star + \frac{\bar{\lambda}^\star}{\rho}}^2_2 \quad = \quad\frac{\mathcal{I}}{2\rho}\norm{\bar{\lambda}^\star}_2^2
	$$
	is constant, from which the assertion follows.
\end{proof}

Next, we work out that the optimal value of~\eqref{SMOP2} also uniquely determines the optimal values of the functions $g$ and $h$. 
\begin{proposition}\label{prop:uniqueness_of_minimizer_SOP}
	For each $\kappa \in [0,1]$ there exists an optimal solution $(\bar{\mathbf{a}}^\star,\mathbf{s}^\star)$ of Problem~\eqref{SMOP2}. For $\kappa \in (0,1)$ the solution is unique. Furthermore, for $\kappa = 0$ or $\kappa = 1$ the optimal value $h(\mathbf{s}^\star)$ or $g(\bar{\mathbf{a}}^\star)$ is unique, resp. In particular, $\mathbf{s}^\star$ is unique for $\kappa = 0$ and $\bar{\mathbf{a}}^\star$ is unique for $\kappa = 1$.
\end{proposition}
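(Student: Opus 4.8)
The plan is to exploit that the objective $f_\kappa$ depends on the decision variables $(\mathbf{a},\mathbf{s})$ only through the aggregate $\bar{\mathbf{a}} = \frac1{\mathcal{I}}\sum_{i}\mathbf{a}_i$ and the slack $\mathbf{s}$, and to carry out the whole analysis in these reduced variables. First I would record that the set of achievable aggregates, $\bar{\mathbb{D}} := \frac1{\mathcal{I}}\sum_{i=1}^{\mathcal{I}}\mathbb{D}_i$ (the scaled Minkowski sum), is convex and compact, since each $\mathbb{D}_i$ is convex and compact and these properties are preserved under Minkowski sums and scaling. The image of the original feasible set under the map $(\mathbf{a},\mathbf{s})\mapsto(\bar{\mathbf{a}},\mathbf{s})$ is then exactly the reduced feasible set $F := \{(\bar{\mathbf{a}},\mathbf{s}) \in \bar{\mathbb{D}}\times\mathbb{R}^{2N}_{\geq0} : (\bar{\mathbf{a}},\mathbf{s})\in\mathbb{S}\}$, which is convex, closed and nonempty; nonemptiness follows by picking any $\bar{\mathbf{a}}\in\bar{\mathbb{D}}$ and choosing $\mathbf{s}$ large enough to satisfy the relaxed tube constraint \eqref{NotationTubeConstraints}. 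Uniqueness of $(\bar{\mathbf{a}}^\star,\mathbf{s}^\star)$ then means uniqueness over $F$.

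For existence, valid for every $\kappa\in[0,1]$ at once, I would sidestep the unboundedness of $F$ in $\mathbf{s}$ by a minimal-slack reduction. For fixed $\bar{\mathbf{a}}$ the smallest feasible slack is given componentwise by $\bar{s}(n) = \max\{0,\bar{a}(n)-\bar{c}(n)\}$ and $\underline{s}(n) = \max\{0,\underline{c}(n)-\bar{a}(n)\}$, defining a continuous map $\bar{\mathbf{a}}\mapsto\mathbf{s}^{\min}(\bar{\mathbf{a}})$. Since any feasible $\mathbf{s}$ dominates $\mathbf{s}^{\min}(\bar{\mathbf{a}})$ componentwise among nonnegative vectors, one has $\norm{\mathbf{s}}_2^2 \geq \norm{\mathbf{s}^{\min}(\bar{\mathbf{a}})}_2^2$ while $g(\bar{\mathbf{a}})$ is unchanged; because $1-\kappa\geq0$ this yields $f_\kappa(\bar{\mathbf{a}},\mathbf{s}^{\min}(\bar{\mathbf{a}})) \leq f_\kappa(\bar{\mathbf{a}},\mathbf{s})$. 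Hence the infimum over $F$ equals the infimum over $\{(\bar{\mathbf{a}},\mathbf{s}^{\min}(\bar{\mathbf{a}})):\bar{\mathbf{a}}\in\bar{\mathbb{D}}\}$, the continuous image of the compact set $\bar{\mathbb{D}}$ and therefore itself compact, on which the continuous $f_\kappa$ attains its minimum. This gives existence of $(\bar{\mathbf{a}}^\star,\mathbf{s}^\star)$ for all $\kappa$.

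For uniqueness I would read off convexity from the quadratic structure. For $\kappa\in(0,1)$ the Hessian in $(\bar{\mathbf{a}},\mathbf{s})$ is the positive definite block-diagonal matrix $\mathrm{diag}(2\kappa I,\,2(1-\kappa)I)$, so $f_\kappa$ is strictly convex jointly; minimizing a strictly convex function over the convex set $F$ forces a unique minimizer. At the endpoints the Hessian degenerates in one block, so only partial uniqueness survives. For $\kappa=1$ the objective $g(\bar{\mathbf{a}})=\norm{\bar{\mathbf{a}}-\bar{\zeta}}_2^2$ is strictly convex in $\bar{\mathbf{a}}$, and since the projection of $F$ onto the $\bar{\mathbf{a}}$-coordinates is convex, two optimizers must share the same $\bar{\mathbf{a}}^\star$ (otherwise the midpoint would attain a strictly smaller value). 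Symmetrically, for $\kappa=0$ the function $h(\mathbf{s})=\norm{\mathbf{s}}_2^2$ is strictly convex in $\mathbf{s}$ and the projection of $F$ onto the $\mathbf{s}$-coordinates is convex, forcing $\mathbf{s}^\star$ to be unique. In each case uniqueness of the surviving variable immediately gives uniqueness of the corresponding optimal value $g(\bar{\mathbf{a}}^\star)$ resp. $h(\mathbf{s}^\star)$.

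The main obstacle I anticipate is organizing existence uniformly in $\kappa$: because $F$ is unbounded along $\mathbf{s}$, the objective is coercive only for $\kappa<1$, so a naive compactness argument breaks down precisely at $\kappa=1$. The minimal-slack reduction is the device that circumvents this; the remaining work is routine bookkeeping, namely checking that $\mathbf{s}^{\min}$ is continuous and feasible and that Minkowski sums preserve convexity and compactness of $\bar{\mathbb{D}}$.
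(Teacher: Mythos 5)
Your proof is correct, and its uniqueness part coincides with the paper's: both arguments rest on the diagonal Hessian $\mathrm{diag}(2\kappa I,2(1-\kappa)I)$, strict convexity of $f_\kappa$ for $\kappa\in(0,1)$, and strict convexity of the surviving quadratic block at the endpoints. Where you genuinely diverge is in the treatment of existence. The paper's proof never addresses attainment at all --- it passes directly from \enquote{$f_\kappa$ is strictly convex} to \enquote{there is a unique minimizer}, even though the feasible set of \eqref{SMOP2} is unbounded in the $\mathbf{s}$-direction, so that for $\kappa=1$ (where the objective does not penalize $\mathbf{s}$) coercivity fails and a compactness argument is not immediate. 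Your minimal-slack reduction $\bar{\mathbf{a}}\mapsto\mathbf{s}^{\min}(\bar{\mathbf{a}})$, combined with compactness of the scaled Minkowski sum $\bar{\mathbb{D}}=\frac{1}{\mathcal{I}}\sum_i\mathbb{D}_i$, closes exactly this gap and does so uniformly in $\kappa\in[0,1]$; it also makes explicit the reduction from the non-unique lifted variables $(\mathbf{a},\mathbf{s})$ to the aggregate $(\bar{\mathbf{a}},\mathbf{s})$ in which the uniqueness claim is actually stated. In short, the paper buys brevity by leaning on the (earlier stated) compactness of $\mathbb{D}$ without spelling out why that suffices, while your version is longer but self-contained and is the one I would consider complete.
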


\begin{proof}
	First consider $\kappa\in(0,1)$ and the modified problem~\eqref{SMOP2}. %
We show that $f_\kappa$ is strictly convex. Since $f_\kappa$ is twice differentiable, it suffices to show that its Hessian $H_{f_\kappa}$ is positive definite, see, e.g.\ \cite[Subsection~6.4]{Luen84}. It holds
\begin{align*}
	\nabla f_\kappa(\bar{\mathbf{a}}, \mathbf{s}) = \frac{\mathrm{d}}{\mathrm{d}(\bar{\mathbf{a}}, \mathbf{s})} \left(\kappa\sum_{k=1}^N(\bar{\mathbf{a}}_k - \bar{\zeta}_k)^2 + (1-\kappa) \sum_{\ell=1}^{2N} \mathbf{s}_\ell^2\right) 
	= 2 
	\begin{pmatrix}
		\kappa(\bar{\mathbf{a}} - \bar{\zeta}) \\
		(1-\kappa)\mathbf{s}
	\end{pmatrix}
	\in \mathbb{R}^{3N}
\end{align*}
and therefore
\begin{align*}
	H_{f_\kappa} (\bar{\mathbf{a}}, \mathbf{s}) =2 \, \frac{\mathrm{d}}{\mathrm{d} (\bar{\mathbf{a}}, \mathbf{s})}
	\begin{pmatrix}
		\kappa(\bar{\mathbf{a}} - \bar{\zeta}) \\
		(1-\kappa) \mathbf{s}
	\end{pmatrix}
	=2 \, \mathrm{diag}(\underbrace{\kappa, \ldots, \kappa}_{N \text{ times}},\underbrace{1 - \kappa, \ldots, 1 - \kappa}_{2N \text{ times}}) \in \mathbb{R}^{3N \times 3N}.
\end{align*}
Furthermore, since $\kappa\in(0,1)$, the Hessian $H_{f_\kappa}$ is positive definite and thus $f_\kappa$ is strictly convex.
Hence, there is a unique minimizer $(\bar{\mathbf{a}}^\star,\mathbf{s}^\star)$ for~\eqref{SMOP2}.

Now consider $\kappa\in\{0,1\}$. Since $f_\kappa$ is at least convex in this case, there exists a unique optimal value $\kappa g(\bar{\mathbf{a}}^\star)$ or $(1-\kappa)h(\mathbf{s}^\star)$ and hence, $g(\bar{\mathbf{a}}^\star)$ or $h(\mathbf{s}^\star)$ is unique, resp. Furthermore, strict convexity of $h$ ($g$) yields uniqueness of $\mathbf{s}^\star$ for $\kappa = 0$ ($\bar{\mathbf{a}}^\star$ for $\kappa = 1$).
\end{proof}

Due to Proposition~\ref{prop:uniqueness_of_minimizer_SOP} we can reformulate~\eqref{SMOP2} as 
\begin{equation}\label{SMOP3}\tag{SMOP3}
	\fbox{
	\centering
	\begin{minipage}{0.6\textwidth}
	\begin{align*}
		\min_{(g,h)}\quad & \tilde{f}_\kappa(g,h) = \kappa g + (1-\kappa)h\\
		\mathrm{subject \; to} \quad & (g,h) \in S = \Set{(\hat{g}, \hat{h}) \in \mathbb{R}^2 | 
		\begin{matrix}
			\exists \, (\bar{\mathbf{a}}, \mathbf{s}) \in \mathbb{S} : \hat{g} = g(\bar{\mathbf{a}}), \, \hat{h} = h(\mathbf{s}), \\ 
			\exists \, \mathbf{z} \in \mathbb{D} : \bar{\mathbf{a}} = \frac {1}{\mathcal{I}} \sum_{i=1}^{\mathcal{I}} \mathbf{z}_i
		\end{matrix}},
	\end{align*}
	\end{minipage}
	}
\end{equation}
where the optimization is w.r.t. the set of feasible function value pairs.

A fundamental observation on existence and uniqueness of solutions of~\eqref{SMOP3} can be directly deduced from Proposition~\ref{prop:uniqueness_of_minimizer_SOP}.
\begin{corollary}
For each $\kappa \in [0,1]$ Problem~\eqref{SMOP3} has an optimal solution. For $\kappa \in (0,1)$ the solution is unique.
\end{corollary}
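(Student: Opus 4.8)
The plan is to recognize that \eqref{SMOP3} is merely \eqref{SMOP2} transcribed into the space of attainable objective-value pairs, so that the claim drops out of Proposition~\ref{prop:uniqueness_of_minimizer_SOP} with essentially no extra work. Concretely, I would introduce the map $\Phi$ that sends each feasible point $(\bar{\mathbf{a}},\mathbf{s})$ of \eqref{SMOP2} (i.e.\ each pair admitting a lift $\mathbf{z} \in \mathbb{D}$ with $\bar{\mathbf{a}} = \tfrac{1}{\mathcal{I}}\sum_{i=1}^{\mathcal{I}} \mathbf{z}_i$ and $(\bar{\mathbf{a}},\mathbf{s}) \in \mathbb{S}$) to the pair $(g(\bar{\mathbf{a}}),h(\mathbf{s}))$. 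By the very definition of the set $S$, the map $\Phi$ is surjective onto $S$, and it satisfies $\tilde{f}_\kappa(\Phi(\bar{\mathbf{a}},\mathbf{s})) = \kappa g(\bar{\mathbf{a}}) + (1-\kappa)h(\mathbf{s}) = f_\kappa(\bar{\mathbf{a}},\mathbf{s})$. Hence minimizing $\tilde{f}_\kappa$ over $S$ and minimizing $f_\kappa$ over the (reduced) feasible set of \eqref{SMOP2} are equivalent problems sharing the same optimal value.

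For existence, I would simply invoke Proposition~\ref{prop:uniqueness_of_minimizer_SOP} to obtain an optimal solution $(\bar{\mathbf{a}}^\star,\mathbf{s}^\star)$ of \eqref{SMOP2}; its image $\Phi(\bar{\mathbf{a}}^\star,\mathbf{s}^\star) = (g(\bar{\mathbf{a}}^\star),h(\mathbf{s}^\star)) \in S$ then attains the common optimal value and is therefore an optimal solution of \eqref{SMOP3}.

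For uniqueness in the case $\kappa \in (0,1)$, I would pull an arbitrary optimal pair $(g^\star,h^\star)$ of \eqref{SMOP3} back through $\Phi$: by surjectivity there exists a feasible $(\bar{\mathbf{a}},\mathbf{s})$ of \eqref{SMOP2} with $\Phi(\bar{\mathbf{a}},\mathbf{s}) = (g^\star,h^\star)$, and since $f_\kappa(\bar{\mathbf{a}},\mathbf{s}) = \tilde{f}_\kappa(g^\star,h^\star)$ equals the common optimal value, $(\bar{\mathbf{a}},\mathbf{s})$ is optimal for \eqref{SMOP2}. Proposition~\ref{prop:uniqueness_of_minimizer_SOP} guarantees that for $\kappa \in (0,1)$ the optimal pair $(\bar{\mathbf{a}}^\star,\mathbf{s}^\star)$ is unique, forcing $(\bar{\mathbf{a}},\mathbf{s}) = (\bar{\mathbf{a}}^\star,\mathbf{s}^\star)$ and hence $(g^\star,h^\star) = (g(\bar{\mathbf{a}}^\star),h(\mathbf{s}^\star))$.

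There is no genuine obstacle here, as the corollary is a direct translation of Proposition~\ref{prop:uniqueness_of_minimizer_SOP} into value space; the only point deserving mild care is the backward direction of the uniqueness argument, namely verifying that every optimizer in value space lifts to an optimizer of \eqref{SMOP2}. This is exactly where surjectivity of $\Phi$ together with the identity $\tilde{f}_\kappa \circ \Phi = f_\kappa$ is used, and it is this lifting that prevents distinct preimages from producing spurious additional optimizers of \eqref{SMOP3}.
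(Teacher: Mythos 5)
Your proposal is correct and follows exactly the route the paper intends: the paper gives no explicit proof, stating only that the corollary is \enquote{directly deduced from Proposition~\ref{prop:uniqueness_of_minimizer_SOP}}, and your argument simply makes explicit the value-space translation (surjectivity of the evaluation map onto $S$ together with $\tilde{f}_\kappa\circ\Phi=f_\kappa$) that this deduction relies on. No gaps; the lifting step you flag for the uniqueness direction is indeed the only point requiring care, and you handle it correctly.
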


\section{Pareto Frontier}

This section is dedicated to the Pareto frontier of the optimization problem~\eqref{MOP}. %
Firstly, we characterize the Pareto frontier and provide a brief sensitivity analysis w.r.t. the initial SOC and the size of the tube constraints. Then, we quantify the trade-off between the two objectives using the concept of proper optimality.

%we calculate the optimal values of the component functions and investigate the coherence between those values and the weighting parameter. Recalling some well-known results on multiobjective optimization we are able to determine the Pareto frontier. Furthermore, we provide a brief sensitivity analysis of the Pareto frontier w.r.t. the tube constraints and the initial state of charge. In Subsection~\ref{subsec:ProperOptimality} we then strengthen the optimality concept by introducing an additional constraint, which bounds the trade-off between both objectives.

\subsection{Determination of the Pareto frontier}

We study the Pareto frontier of~\eqref{MOP} for the open-loop optimal control problem to be solved at an arbitrary but fixed time instant $k \in \mathbb{N}_0$ within the MPC scheme presented in Algorithm~\ref{alg:MPC}. We illustrate our results for $k = 0$ emphasizing that the numerical findings can be directly transferred to other time instants.

Since we scalarize \eqref{MOP} using a weighted sum, we are able to use the following characterization of efficiency given as Propositions~3.9 and~3.10 in~\cite{Ehrg05}.

\begin{proposition}\label{prop:ehrgott_pareto_opt}
Consider MOP~\eqref{OP} and its scalarization
\begin{align}\label{eq:TheoremPropOptScalarization}
	\min_{x \in \mathcal{M}} \quad \sum_{i=1}^m \mu_i f_i(x) 
\end{align}
with $\mu_i \geq 0$, $i \in [1:m]$. 
\begin{enumerate}
	\item If $\mu_j > 0$ for some $j \in [1:m]$ and $x^\star$ is an optimal solution of~\eqref{eq:TheoremPropOptScalarization}, then $x^\star$ is weakly efficient for~\eqref{OP}.
	\item If $x^\star$ is a unique optimal solution of~\eqref{eq:TheoremPropOptScalarization}, then $x^\star$ is efficient for~\eqref{OP}.
	\item Let in addition $\mathcal{M}$ be a convex set and let $f_i$, $i \in [1:m]$, be convex functions. If $x^\star$ is weakly efficient for~\eqref{OP} then there exist $\mu_i \geq 0$, $i \in [1:m]$, such that $x^\star$ is an optimal solution of~\eqref{eq:TheoremPropOptScalarization}.
\end{enumerate}
\end{proposition}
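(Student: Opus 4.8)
The plan is to treat the three implications separately, since (i) and (ii) are elementary contradiction arguments relying only on the nonnegativity of the weights, whereas (iii) is the genuine content and requires a separation argument built on the convexity hypotheses. Throughout I would first translate the paper's contrapositive formulation of efficiency into the equivalent statement that $x^\star$ is efficient precisely when no $x \in \mathcal{M}$ dominates it, i.e.\ when there is no $x$ with $f_i(x) \le f_i(x^\star)$ for all $i$ and $f_j(x) < f_j(x^\star)$ for some $j$; weak efficiency is the weaker requirement that no $x$ satisfies $f_i(x) < f_i(x^\star)$ for all $i$ simultaneously.

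For (i), assume $x^\star$ solves the scalarization \eqref{eq:TheoremPropOptScalarization} with $\mu_j > 0$ and suppose, for contradiction, that some $x \in \mathcal{M}$ satisfies $f_i(x) < f_i(x^\star)$ for every $i$. Multiplying by $\mu_i \ge 0$ preserves the inequalities, giving $\mu_i f_i(x) \le \mu_i f_i(x^\star)$, and the $j$-th one is strict because $\mu_j > 0$; summing over $i$ contradicts optimality of $x^\star$. For (ii), suppose $x^\star$ is the \emph{unique} minimizer but is dominated by some $x \ne x^\star$. Then $\sum_i \mu_i f_i(x) \le \sum_i \mu_i f_i(x^\star)$, which combined with optimality of $x^\star$ forces equality, making $x$ a second minimizer and contradicting uniqueness.

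The substance is (iii). Here I would introduce the convex set $G := \Set{ y \in \mathbb{R}^m | \exists\, x \in \mathcal{M} \text{ with } f_i(x) \le y_i \ \forall\, i }$, whose convexity follows from convexity of $\mathcal{M}$ and of each $f_i$, together with the open convex set $C := \Set{ y \in \mathbb{R}^m | y_i < f_i(x^\star) \ \forall\, i }$. Weak efficiency of $x^\star$ is exactly the statement $G \cap C = \emptyset$: any common point $y$ would yield $x \in \mathcal{M}$ with $f_i(x) \le y_i < f_i(x^\star)$ for all $i$. Since $C$ is open and both sets are convex and disjoint, the separating hyperplane theorem furnishes a nonzero $\mu \in \mathbb{R}^m$ and $\alpha \in \mathbb{R}$ with $\mu^\top y \ge \alpha$ on $G$ and $\mu^\top y \le \alpha$ on $C$.

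Two short observations then finish the proof. First, $G$ is upward closed (if $y \in G$ then $y + \mathbb{R}^m_{\ge 0} \subseteq G$), so boundedness of $\mu^\top y$ from below on $G$ forces $\mu \ge 0$ componentwise, since otherwise sending one coordinate to $+\infty$ would violate $\mu^\top y \ge \alpha$. Second, $f(x^\star) \in G$ while $f(x^\star) \in \overline{C}$, so passing to the limit gives $\mu^\top f(x^\star) = \alpha$; hence $\mu^\top f(x) \ge \alpha = \mu^\top f(x^\star)$ for every $x \in \mathcal{M}$, which says precisely that $x^\star$ minimizes $\sum_i \mu_i f_i$ with weights $\mu_i \ge 0$. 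I expect the separation step of (iii) to be the main obstacle, specifically justifying $\mu \ge 0$ from the upward-closedness of $G$ and handling the boundary coincidence $\mu^\top f(x^\star) = \alpha$, since the weak-efficiency hypothesis only excludes strict (interior) domination and the weights produced may have some $\mu_i = 0$.
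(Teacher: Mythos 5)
Your proof is correct. The paper does not prove this proposition itself --- it imports it verbatim from the cited reference (Propositions~3.9 and~3.10 in Ehrgott's book) --- and your argument is precisely the standard one found there: direct contradictions from the nonnegativity of the weights for (i) and (ii), and for (iii) a separating-hyperplane argument applied to the upward-closed convex set $f(\mathcal{M})+\mathbb{R}^m_{\geq 0}$ and the open dominated orthant at $f(x^\star)$, with $\mu \geq 0$ extracted from upward-closedness and the anchoring $\mu^\top f(x^\star)=\alpha$ from $f(x^\star)$ lying in the closure of both sets.
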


The main result of this section, a characterization of the Pareto frontier of~\eqref{MOP}, is stated in the following proposition. Note that the optimal vector $\mathbf{s}^\star$ and the optimal vector $\bar{\mathbf{z}}^\star$ are unique for $\kappa = 0$ and for $\kappa = 1$, resp. (see Proposition~\ref{prop:uniqueness_of_minimizer_SOP}). To find efficient points for $\kappa = 0$ and $\kappa = 1$, we consider the auxiliary problems
\begin{align}\label{auxi_problem1}
\begin{split}
	\min_{\bar{\mathbf{z}}} \quad & g(\bar{\mathbf{z}}) \\
	\mathrm{subject \; to} \quad & (\bar{\mathbf{z}},\mathbf{s}^\star) \in \mathbb{S}, \\ 	& \bar{\mathbf{z}} : \exists\, (\mathbf{z}_1^\top,\ldots,\mathbf{z}_{\mathcal{I}}^\top)^\top \in \mathbb{D} \text{ satisfying \eqref{NotationZbar}}
\end{split}
\end{align}
and
\begin{align}\label{auxi_problem2}
\begin{split}
	\min_{\mathbf{s}} \quad & h(\mathbf{s}) \\
	\mathrm{subject \; to} \quad & (\bar{\mathbf{z}}^\star,\mathbf{s}) \in \mathbb{S}, \hphantom{(\mathbf{z}_\mathcal{I}^\top)^top \in \mathbb{D} \text{ satisfying \eqref{NotationZbar}}}
	%\; \mathbf{z} \in \mathbb{D} 
\end{split}
\end{align}
%where $h(\mathbf{s}^\star)$ and $g(\bar{\mathbf{a}}^\star)$ denote the unique optimal value of~\eqref{SMOP2} for $\kappa = 0$ and $\kappa = 1$
resp.

\begin{proposition}\label{prop:main_result}
Let us consider the multiobjective optimization problem~\eqref{MOP}. Then, the following statements hold true:
\begin{enumerate}
	\item The set of weakly efficient points of~\eqref{MOP} is compact and connected.
	\item Consider $\kappa \in (0,1)$ and the unique optimal solution $(g^\star,h^\star)$ of~\eqref{SMOP3}. Then, there exists a unique weakly efficient point $(\bar{\mathbf{z}}^\star,\mathbf{s}^\star)$ of~\eqref{MOP} with $g(\bar{\mathbf{z}}^\star) = g^\star$ and $h(\mathbf{s}^\star) = h^\star$. Moreover, $(\bar{\mathbf{z}}^\star,\mathbf{s}^\star)$ is efficient.
	\item Consider $\kappa \in \{0,1\}$ and an optimal solution $(g^\star,h^\star)$ of~\eqref{SMOP3}. Then, there exists a unique weakly efficient point $(\bar{\mathbf{z}}^\star,\mathbf{s}^\star)$ of~\eqref{MOP} with $g(\bar{\mathbf{z}}^\star) = g^\star$ and $h(\mathbf{s}^\star) = h^\star$. If $g(\bar{\mathbf{z}}^\star)$ or $h(\mathbf{s}^\star)$ is the optimal value of Problem~\eqref{auxi_problem1} for $\kappa = 0$ or Problem~\eqref{auxi_problem2} for $\kappa = 1$, then $(\bar{\mathbf{z}}^\star,\mathbf{s}^\star)$ is efficient.
%	\item Let $\kappa \in \{0,1\}$. Then, there exists a unique Pareto optimal point $(\bar{\mathbf{z}}^\star,\mathbf{s}^\star)$ of~\eqref{MOP} with $g(\bar{\mathbf{z}}^\star) \leq g^\star$ and $h(\mathbf{s}^\star) \leq h^\star$ for all optimal solutions $(g^\star,h^\star)$ of~\eqref{SMOP3}.  
	\item The function $P : [0,1] \to \mathbb{R}^2$, $\kappa \mapsto (g(\bar{\mathbf{z}}^\star),h(\mathbf{s}^\star))$, where
	\begin{equation}
		\begin{cases}
			\quad (\bar{\mathbf{z}}^\star,\mathbf{s}^\star) \text{ is the unique optimal solution of~\eqref{SMOP}}, \quad & \text{if } \kappa \in (0,1) \\
			& \\
			\quad \mathbf{s}^\star \text{ is the unique optimal vector such that } (\bar{\mathbf{z}},\mathbf{s}^\star) \text{ is solution}  & \\	
			\quad \text{of~\eqref{SMOP} for some } \bar{\mathbf{z}} \text{ with } (\bar{\mathbf{z}},\mathbf{s}^\star) \in \mathbb{S} \text{ and }\bar{\mathbf{z}}^\star \text{ is the}\\
			\quad \text{unique optimal solution of~\eqref{auxi_problem1}}, & \text{if } \kappa = 0 \\
			& \\
			\quad \bar{\mathbf{z}}^\star \text{ is the unique optimal vector such that } (\bar{\mathbf{z}}^\star,\mathbf{s}) \text{ is solution} & \\
		\quad \text{of~\eqref{SMOP} for some } \mathbf{s} \text{ with } (\bar{\mathbf{z}}^\star,\mathbf{s}) \in \mathbb{S} \text{ and }\mathbf{s}^\star \text{ is the }\\
		\quad \text{unique optimal solution of~\eqref{auxi_problem2}}, & \text{if } \kappa = 1
		\end{cases}
		\nonumber
	\end{equation}
	is well-defined and the component $P_1$ strictly decreases whereas $P_2$ strictly increases. Furthermore, $P$ is injective and the image of $P$ coincides with the Pareto frontier of~\eqref{MOP}.
\end{enumerate}
\end{proposition}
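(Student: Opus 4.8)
The plan is to push the whole analysis into the two–dimensional value space of~\eqref{SMOP3} and to exploit that $g$ and $h$ are strictly convex in their own variables, while the set $\bar{\mathbb{D}}:=\Set{\bar{\mathbf{z}}\in\mathbb{R}^N | \exists\,\mathbf{z}\in\mathbb{D}:\ \bar{\mathbf{z}}=\tfrac1{\mathcal I}\sum_{i=1}^{\mathcal I}\mathbf{z}_i}$ is convex and compact, being a linear image of the convex compact set $\mathbb{D}$. A useful first reduction is that, for fixed $\bar{\mathbf{z}}$, the slack component of any efficient point minimises $h$ subject to $(\bar{\mathbf{z}},\mathbf{s})\in\mathbb{S}$; denoting this value by $h_{\min}(\bar{\mathbf{z}})$ turns each scalarisation into the problem of minimising $\kappa g(\bar{\mathbf{z}})+(1-\kappa)h_{\min}(\bar{\mathbf{z}})$ over $\bar{\mathbf{z}}\in\bar{\mathbb{D}}$, with strictly convex leading term. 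The bridge between these weighted–sum problems and (weak) efficiency is provided verbatim by Proposition~\ref{prop:ehrgott_pareto_opt}, and existence and uniqueness of the scalarised minimisers are exactly Proposition~\ref{prop:uniqueness_of_minimizer_SOP}. For~(i) I would obtain connectedness by writing the weakly efficient set as the union, over weights $(\mu_1,\mu_2)\ge 0$ with $(\mu_1,\mu_2)\neq 0$, of the (convex, hence connected) solution sets of the scalarisation, which is itself connected as these sets vary without gaps as the weight runs through the simplex; and compactness from closedness (continuity of $g,h$) together with boundedness, where boundedness of the $\bar{\mathbf{z}}$–block is immediate from compactness of $\bar{\mathbb{D}}$, while the slack block is bounded on efficient points because an oversized slack is strictly dominated, the only delicate fibre being the one over the $g$–minimiser.

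Parts~(ii) and~(iii) are then pointwise. For $\kappa\in(0,1)$ the scalarised minimiser $(\bar{\mathbf{z}}^\star,\mathbf{s}^\star)$ is unique by Proposition~\ref{prop:uniqueness_of_minimizer_SOP}, so Proposition~\ref{prop:ehrgott_pareto_opt}(ii) yields that it is efficient; a second weakly efficient point with the same values $(g^\star,h^\star)$ would be a second minimiser of the same strictly convex objective, which is impossible, so the representative is unique. For $\kappa\in\{0,1\}$ one objective carries zero weight, so Proposition~\ref{prop:ehrgott_pareto_opt}(i) gives only weak efficiency and leaves the unweighted block free; I would fix it through the auxiliary problems~\eqref{auxi_problem1} and~\eqref{auxi_problem2} and upgrade to efficiency directly: a point dominating the selected one would have to reproduce the (unique) optimal value of the weighted objective, hence share $\bar{\mathbf{z}}^\star$ (for $\kappa=1$) or $\mathbf{s}^\star$ (for $\kappa=0$) and could only improve the remaining block, contradicting its minimality in the auxiliary problem. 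Uniqueness of the weakly efficient representative again follows from strict convexity in the free block.

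For~(iv) the map $P$ is well defined by the two previous items. Weak monotonicity is the exchange argument: for $\kappa_1<\kappa_2$ put $(g_j,h_j):=P(\kappa_j)$; optimality of each value pair for its own weight gives $\kappa_1(g_1-g_2)+(1-\kappa_1)(h_1-h_2)\le 0$ and $\kappa_2(g_1-g_2)+(1-\kappa_2)(h_1-h_2)\ge 0$, and subtracting the first from the second yields $(g_1-g_2)\ge(h_1-h_2)$. Since both $P(\kappa_1)$ and $P(\kappa_2)$ are efficient they are mutually non–dominated, which rules out the ordering $g_1<g_2$ (it would force $h_1>h_2$ and contradict the inequality just derived); hence $g_1\ge g_2$ and $h_1\le h_2$, i.e.\ $P_1$ is non–increasing and $P_2$ non–decreasing. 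The image identity follows once strictness is in place: $\mathrm{im}\,P$ is contained in the Pareto frontier because every $P(\kappa)$ is efficient, and contains it because, by convexity of the data, Proposition~\ref{prop:ehrgott_pareto_opt}(iii) represents each weakly efficient point as a weighted–sum minimiser, hence as some $P(\kappa)$.

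The main obstacle is upgrading weak monotonicity to \emph{strict} monotonicity, equivalently establishing injectivity of $P$. The exchange inequalities do not by themselves exclude a plateau — a whole sub-interval of weights collapsing to a single frontier point; by the argument above such a coincidence $P(\kappa_1)=P(\kappa_2)$ occurs precisely when the constrained minimiser $\bar{\mathbf{z}}^\star(\kappa)$ stays pinned at the boundary of $\bar{\mathbb{D}}$ (or at a kink of $h_{\min}$) as $\kappa$ ranges over an interval, which is a genuinely possible phenomenon for constrained convex problems. Ruling it out for $\kappa\in(0,1)$ is where the strict convexity of $g$ must be played against the geometry of the normal cone of $\bar{\mathbb{D}}$: in the interior of $\bar{\mathbb{D}}$ stationarity for two distinct weights forces $\nabla g=\nabla h_{\min}=0$ and hence a joint, non-conflicting minimiser, which the standing assumption of genuinely conflicting objectives excludes; the boundary case is the delicate one and is exactly why the degenerate endpoints $\kappa\in\{0,1\}$ are separated off and handled through~\eqref{auxi_problem1}--\eqref{auxi_problem2}.
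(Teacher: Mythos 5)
Your route through parts (i)--(iii) and the image identity in (iv) is essentially the paper's: Proposition~\ref{prop:uniqueness_of_minimizer_SOP} for existence/uniqueness of the scalarised minimisers, Proposition~\ref{prop:ehrgott_pareto_opt}(i)--(ii) for their (weak) efficiency, the auxiliary problems~\eqref{auxi_problem1}--\eqref{auxi_problem2} to pin down and upgrade the endpoints $\kappa\in\{0,1\}$, and Proposition~\ref{prop:ehrgott_pareto_opt}(iii) for surjectivity onto the frontier. Two local deviations are worth noting. For (i) you replace the paper's citation of Theorem~4.6 in~\cite{Luc88} by a ``union over the weight simplex'' argument; as stated (``vary without gaps'') this is not a proof --- you would need an actual connectedness argument for the solution-set correspondence as the weight varies, which is precisely what the cited theorem packages. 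On the other hand, your observation that boundedness of the slack block must be argued on efficient points (rather than read off a ``compact feasible set'' --- $\mathbb{S}$ is unbounded in $\mathbf{s}$) is more careful than the paper's one-liner. In (iv), your subtraction of the two optimality inequalities is a cleaner algebraic route to the same non-strict monotonicity that the paper obtains via its ratio-based contradiction.

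The genuine gap is the one you flag yourself: strict monotonicity of $P_1,P_2$, equivalently injectivity of $P$, is part of the assertion and you do not prove it. The paper attacks this step differently from your normal-cone sketch: it first records the dichotomy that, by optimality of $P(\kappa)$, a strict decrease of one component from $\kappa$ to $\kappa+\varepsilon$ forces a strict increase of the other, and then runs a contradiction argument excluding the anti-monotone ordering $P_1(\kappa)<P_1(\kappa+\varepsilon)$. Observe, however, that this only rules out the wrong \emph{direction} of strict change; the plateau case $P(\kappa)=P(\kappa+\varepsilon)$ --- exactly the scenario you describe, a kink of the frontier supported by an interval of weights, e.g.\ when $\bar{\mathbf{z}}^\star$ is pinned at a vertex of the polyhedral constraint set --- is not explicitly excluded there either. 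So you have put your finger on the genuinely delicate step rather than resolved it: to finish, you must either reproduce the paper's argument (accepting that it leaves the plateau case implicit) or supply the missing exclusion of plateaus; your interior-point computation, which forces $\nabla g=\nabla h_{\min}=0$ and hence non-conflicting objectives, is the right start, but the boundary case remains open in your write-up, and without it neither injectivity nor the strict monotonicity claimed in (iv) is established.
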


\begin{proof}
We show the single claims of the proposition consecutively.
\begin{enumerate}
\item As a subset of the compact feasible set of~\eqref{MOP} the set of weakly efficient points is bounded. Moreover, Theorem~4.6 in~\cite{Luc88} provides closedness and connectedness of the set of weakly efficient points of~\eqref{MOP} and hence, compactness, which shows the assertion.

\item Due to construction of~\eqref{SMOP3} Statement~(ii) follows directly from Proposition~\ref{prop:ehrgott_pareto_opt}~(ii).

\item Similar to the second claim the first part of Statement~(iii) can be deduced from Proposition~\ref{prop:ehrgott_pareto_opt}~(i). To prove the second part assume without loss of generality $\kappa = 0$. Due to Proposition~\ref{prop:uniqueness_of_minimizer_SOP} there exists a minimizer $(\bar{\mathbf{a}},\mathbf{s}^\star)$ of~\eqref{SMOP2} and hence a minimizer $(\bar{\mathbf{z}},\mathbf{s}^\star)$ of~\eqref{SMOP} where $\mathbf{s}^\star$ is unique. Moreover, since $g$ is strictly convex, the optimal solution $\bar{\mathbf{z}}^\star$ of Problem~\eqref{auxi_problem1} is unique. By construction, $(\bar{\mathbf{z}}^\star,\mathbf{s}^\star)$ is efficient for~\eqref{MOP}.

\item Well-definedness of~$P$ is directly inherited from $g$ and $h$. The monotonicity properties can be seen as follows. For given $\kappa \in [0,1)$, choose $\varepsilon \in (0,1-\kappa]$. Then, we observe that $P_2(\kappa + \varepsilon) < P_2(\kappa)$ implies $P_1(\kappa + \varepsilon) > P_1(\kappa)$ and vice versa, i.e.\ if one component of the scalarized objective function strictly decreases, the other component has to strictly increase. Otherwise, this would directly yield a contradiction to the optimality of $P_2(\kappa)$ and $P_1(\kappa)$. 

We prove the assertion by contradiction. To this end, let us assume that $P_1(\kappa) < P_1(\kappa + \varepsilon)$. Due to our preliminary considerations, this implies $P_2(\kappa + \varepsilon) < P_2(\kappa)$. Next, optimality yields the inequalities
	\begin{align*}
		(1-\kappa) \Big( P_2(\kappa)-P_2(\kappa + \varepsilon) \Big) & \leq \kappa \Big( P_1(\kappa + \varepsilon) - P_1(\kappa) \Big), \\
		(\kappa+\varepsilon) \Big( P_1(\kappa + \varepsilon) - P_1(\kappa) \Big) & \leq (1-\kappa-\varepsilon) \Big( P_2(\kappa)-P_2(\kappa + \varepsilon) \Big).
	\end{align*}	
	Due to our assumption and its implication, which show that the differences in brackets are strictly positive, we get
	\begin{align*}
		\varepsilon \leq \Big( (1-\kappa) - \varepsilon \Big) \frac {P_2(\kappa)-P_2(\kappa + \varepsilon)}{P_1(\kappa + \varepsilon) - P_1(\kappa)} - \kappa \leq -\varepsilon \frac {P_2(\kappa)-P_2(\kappa + \varepsilon)}{P_1(\kappa + \varepsilon) - P_1(\kappa)} < 0,
	\end{align*}
	a contradiction to $\varepsilon > 0$.
	
	Note that these considerations also imply the assertion for $\kappa = 1$ since the case $P_2(1) < P_2(\kappa)$ for some $\kappa \in [0,1)$ is equivalent to $P_2(\kappa + \varepsilon) < P_2(\kappa)$, where $\varepsilon = 1 - \kappa$. This yields $P_1(\kappa + \varepsilon) > P_1(\kappa)$ and hence, $P_1(1) > P_1(\kappa)$, a contradiction to the optimality of $P_1(\kappa)$.
	
Since the components $P_1$ and $P_2$ are strictly monotone, $P$ is injective. Furthermore, Proposition~\ref{prop:ehrgott_pareto_opt}~(iii) yields that each efficient point $(\bar{\mathbf{z}}^\star,\mathbf{s}^\star)$ of~\eqref{MOP} can be found by varying $\kappa \in [0,1]$ and solving~\eqref{SMOP}. Hence, by construction,  
\begin{align}
	P([0,1]) = \Set{(g(\bar{\mathbf{z}}^\star),h(\mathbf{s}^\star)) | (\bar{\mathbf{z}}^\star,\mathbf{s}^\star) \text{ is Pareto optimal of~\eqref{MOP}}}, \nonumber
\end{align}
which completes the proof.
\end{enumerate}
\end{proof}

To determine the Pareto frontier numerically we run Algorithm~\ref{alg:ADMM} for different $\kappa \in [0,1]$. Here, we set $\ubar{\mathbf{c}}_i$ and $\bar{\mathbf{c}}_i$, $i\in[1:\mathcal{I}]$, according to the values depicted in Figure~\ref{fig:pareto} (top right). Let $\mathbf{z}_i(k) \in \mathbb{D}_i$, $i\in[1:\mathcal{I}]$, and $\Pi^0 \in \mathbb{R}^N$ be given.  For given $\kappa\in[0,1]$, the unique optimal values 
%$g(\bar{\mathbf{a}}^\star)$ and $h(\mathbf{s}^\star)$ 
$P_1(\kappa)$ and $P_2(\kappa)$ are denoted by $g_\kappa$ and $h_\kappa$, resp. The results can be found in Figure~\ref{tab:pareto}\hspace{-1.5mm}. Note that for $\kappa = 0.6$ the maximum of $f_\kappa$ is obtained.
\begin{figure}[htb]
	\begin{minipage}{0.525\textwidth}
		\centering\vspace*{3.375cm}
		\includegraphics[width=1.\textwidth]{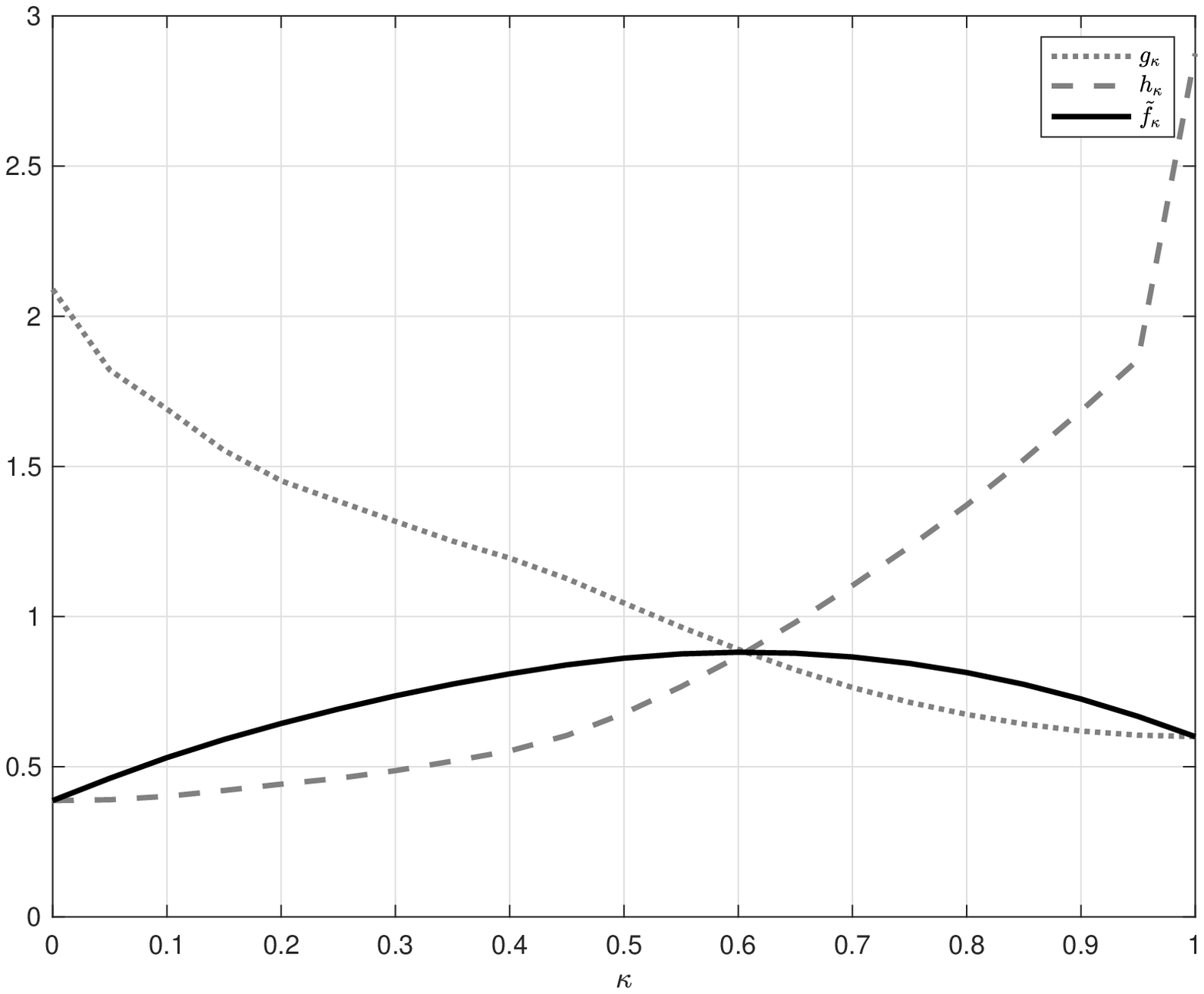}
	\end{minipage}
\begin{minipage}{0.45\textwidth}
	\centering
	\begin{tabular}{|c||c|c|c|}\hline
		$\kappa$ & $g_\kappa$ & $h_\kappa$ & $\tilde{f}_\kappa(g_\kappa,h_\kappa)$ %& ADMM~\eqref{NotationObjectiveFunctionADMM} %$\tilde{f}_\kappa(g_\kappa,h_\kappa)+\frac{\rho\I}{2}\norm{\bar{\mathbf{z}}^\star-\bar{\mathbf{a}}^\star+\frac{\bar{\lambda}^\star}{\rho}}_2^2$
		\\ \hline\hline
		0.000 & 2.090 & 0.387  &       0.387 \\ 
		0.050 & 1.821 & 0.390  &       0.462 \\ 
		0.100 & 1.691 & 0.401  &       0.530 \\ 
		0.150 & 1.553 & 0.421  &       0.591 \\ 
		0.200 & 1.452 & 0.442  &       0.644 \\ 
		0.250 & 1.384 & 0.462  &       0.692 \\ 
		0.300 & 1.317 & 0.487  &       0.736 \\ 
		0.350 & 1.251 & 0.519  &       0.775 \\ 
		0.400 & 1.195 & 0.553  &       0.809 \\ 
		0.450 & 1.127 & 0.604  &       0.839 \\ 
		0.500 & 1.045 & 0.678  &       0.862 \\ 
		0.550 & 0.965 & 0.767  &       0.876 \\ 
		0.600 & 0.891 & 0.867  &       0.881 \\ 
		0.650 & 0.823 & 0.981  &       0.878 \\ 
		0.700 & 0.764 & 1.104  &       0.866 \\ 
		0.750 & 0.715 & 1.233  &       0.844 \\ 
		0.800 & 0.674 & 1.372  &       0.814 \\ 
		0.850 & 0.642 & 1.522  &       0.774 \\ 
		0.900 & 0.619 & 1.685  &       0.726 \\ 
		0.950 & 0.605 & 1.855  &       0.668 \\ 
		1.000 & 0.600 & 2.874  &       0.600 \\ \hline
	\end{tabular}
	\end{minipage}
%	\hspace{2mm}
%	\begin{minipage}{0.525\textwidth}
%		\centering\vspace*{3.375cm}
%		\includegraphics[width=1.\textwidth]{figures/g_and_h}
%	\end{minipage}
	\caption{Optimal values $g_\kappa$ and $h_\kappa$ and the scalarized objective function $\tilde{f}_\kappa$ introduced in~\eqref{SMOP3} 
	 for all $\kappa \in 0.05 \cdot [0:20]$.}
	\label{tab:pareto}
\end{figure}

Next, we display the values in the table in Figure~\ref{tab:pareto} in a $(g-h)$-plane to get an approximation of the Pareto frontier, which is visualized in Figure~\ref{fig:pareto} (top left). Note that for $\kappa \to 0$ and $\kappa \to 1$ the arcs asymptotically go to the minimum value of the optimization problem $\min_{(\bar{\mathbf{a}},\mathbf{s}) \in \mathbb{S}} \, h(\mathbf{s})$ and $\min_{(\bar{\mathbf{a}},\mathbf{s}) \in \mathbb{S}} \, g(\bar{\mathbf{a}})$, resp.
\begin{figure}[htb]
	\centering
	\includegraphics[width=0.45\textwidth]{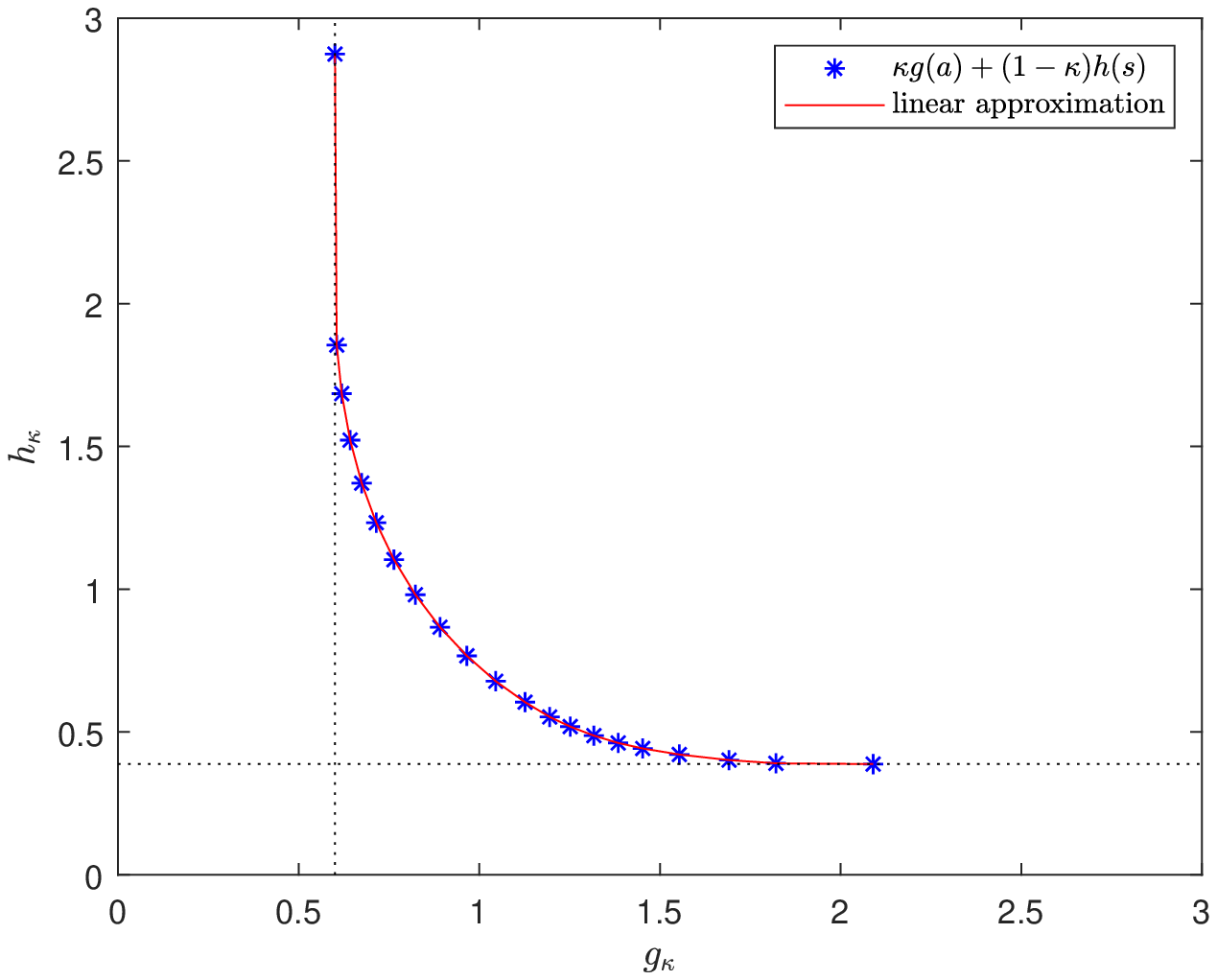}\hspace{8mm}
	\includegraphics[width=0.43\textwidth]{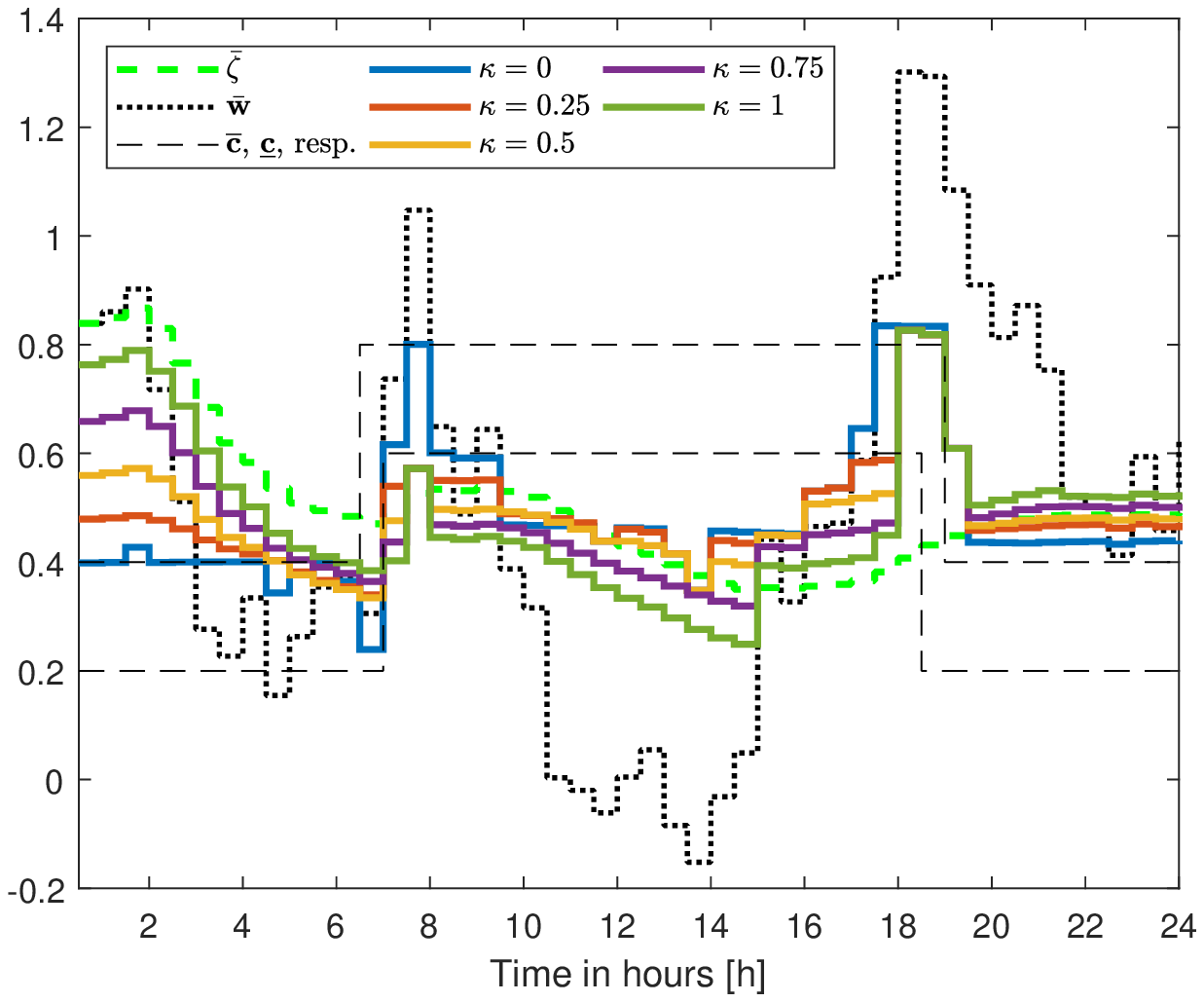}\\[2mm]
	\includegraphics[width=0.45\textwidth]{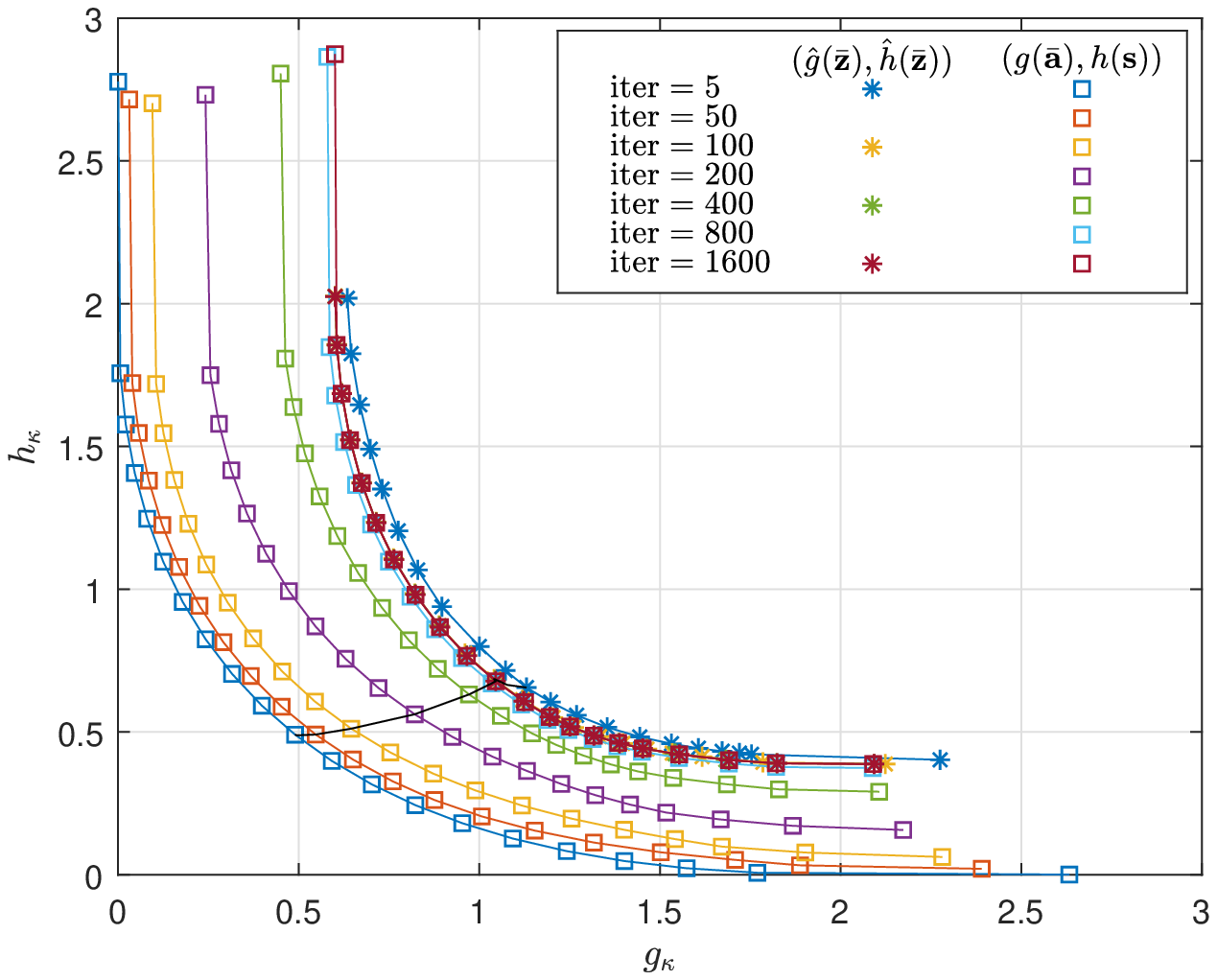}\hspace{5mm}
	\includegraphics[width=0.45\textwidth]{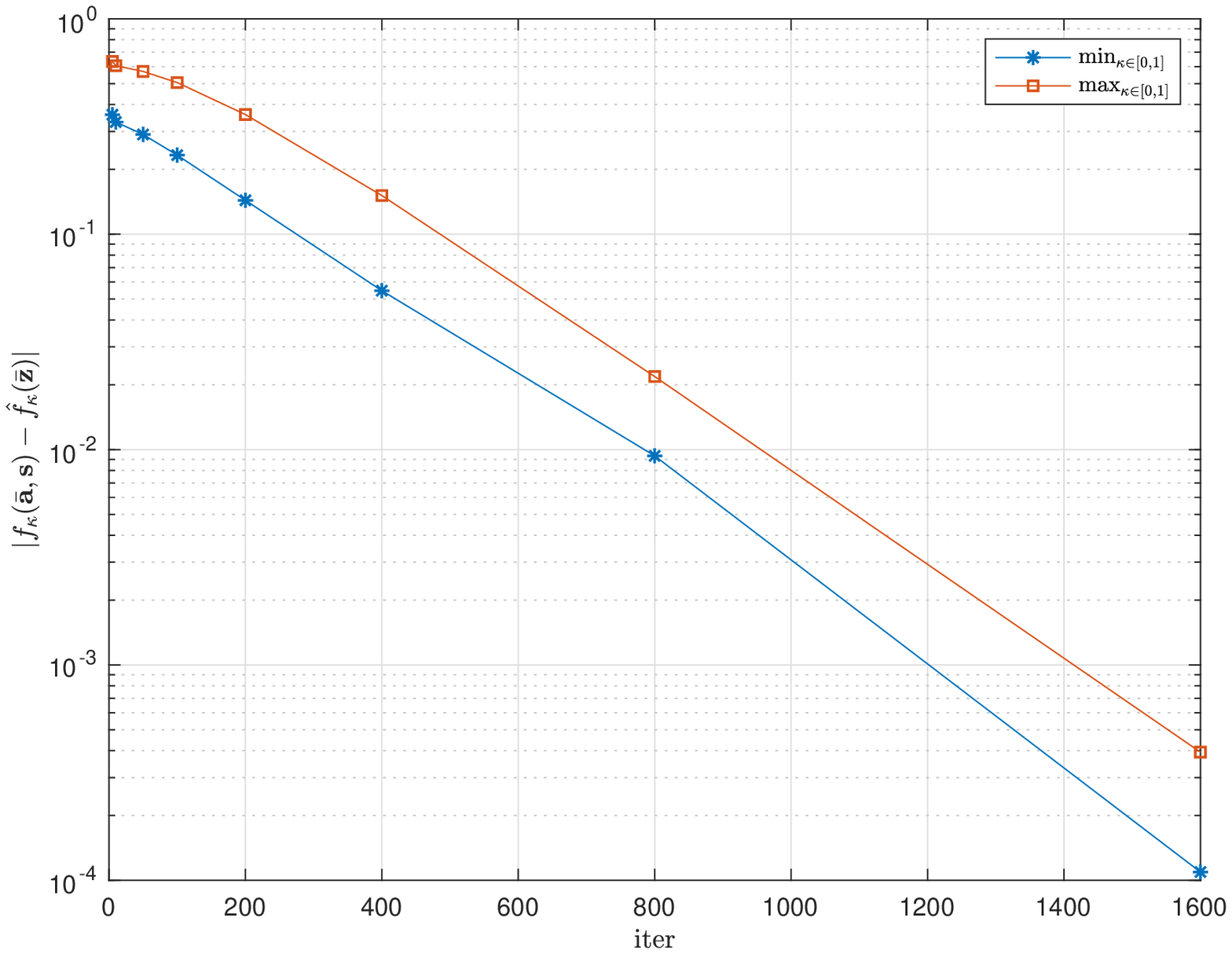}
	\caption{Pareto frontier in a $(g-h)$-plane (top left), open-loop perfomance (top right), convergence of the Pareto frontier (bottom left), and convergence of the cost residual $|f_\kappa - \hat{f}_\kappa|$ (bottom right). The solid black line (bottom left) connects all solutions corresponding to $\kappa = 0.5$.}
	\label{fig:pareto}
\end{figure}
The convergence of the Pareto frontier in dependence of number of iteration steps within the ADMM-algorithm~\ref{alg:ADMM} is depicted in Figure~\ref{fig:pareto} (bottom left). Here, we consider the functions $g : \mathbb{R}^N \to \mathbb{R}$ and $h : \mathbb{R}^{2N} \to \mathbb{R}$ as defined above and in addition we introduce functions $\hat{g}, \hat{h}, \hat{f}_\kappa : \mathbb{R}^N \to \mathbb{R}$ with
\begin{align}
	\hat{g}(\bar{\mathbf{z}}) \quad & = \quad \norm{\bar{\mathbf{z}} - \bar{\zeta}}_2^2, \nonumber \\
	\hat{h}(\bar{\mathbf{z}}) \quad & = \quad
	\norm{
	\begin{pmatrix}
		\max \{\bar{\mathbf{z}} - \bar{\mathbf{c}},0\} \\
		\max \{\ubar{\mathbf{c}} - \bar{\mathbf{z}},0\}
	\end{pmatrix}
	}_2^2, \nonumber \\
	\hat{f}_\kappa(\bar{\mathbf{z}}) \quad & = \kappa \hat{g}(\bar{\mathbf{z}}) + (1-\kappa) \hat{h}(\bar{\mathbf{z}}).
\end{align}
Due to Theorem~\ref{thm:ADDM} the sequence $(\bar{\mathbf{z}}^{\ell},\mathbf{a}^\ell,\mathbf{s}^\ell)_{\ell \in \mathbb{N}_0}$ converges to the optimal solution $(\bar{\mathbf{z}}^\star,\mathbf{a}^\star,\mathbf{s}^\star)$ of~\eqref{SMOP2} fulfilling $g(\bar{\mathbf{a}}^\star) = \hat{g}(\bar{\mathbf{z}}^\star)$ and $h(\mathbf{s}^\star) = \hat{h}(\bar{\mathbf{z}}^\star)$. Regarding Figure~\ref{fig:pareto} (bottom left), however, it can be observed that the convergence rate of $\bar{\mathbf{z}}^\ell \to \bar{\mathbf{z}}^\star$ is much higher than the convergence rate of $(\bar{\mathbf{a}}^\ell,\mathbf{s}^\ell) \to (\bar{\mathbf{a}}^\star,\mathbf{s}^\star)$. Note that the points generated by $g$ and $h$ (before convergence) are not feasible while those generated by $\hat{g}$ and $\hat{h}$ are not optimal. 

The remaining graphic of Figure~\ref{fig:pareto} (right) shows the open-loop perfomance (top) and the behaviour of the residual $|f_\kappa - \hat{f}_\kappa|$ depending on both the weighting parameter $\kappa \in [0,1]$ and the number of maximal iterations (bottom).

\begin{remark}\label{rem:image_Pareto_as_graph}
Note that, due to Proposition~\ref{prop:main_result}~(v) the Pareto frontier as depicted in Figure~\ref{fig:pareto} (top left) can be regarded as a the graph of the function $H : P_1([0,1]) \to P_2([0,1])$, $H(P_1(\kappa)) = P_2(\kappa)$, where $P$ is defined as in Proposition~\ref{prop:main_result}.
\end{remark}

\subsection{Sensitivity Analysis}

In Figure~\ref{fig:pareto} we set the average initial SOC $\bar{x}(0) = 1.0$ and the tube size $\bar{\mathbf{c}}-\ubar{\mathbf{c}} = 0.2$. Next we explore how the Pareto frontier changes if we modify these parameters. 

In Figure~\ref{fig:impact_tube_size} (left) different Pareto frontiers depending on the tube size are plotted. Increasing the size of the tube equals a relaxation of the subproblem $\min h(\mathbf{s})$. The larger the difference $\bar{\mathbf{c}}-\underline{\mathbf{c}}$, the easier it gets to keep $h_\kappa$ small. Small tube sizes do not affect the performance of $g$ drastically. If the tube becomes big enough to keep $h(\mathbf{s})$ small without effort, however, the focus of the optimization shifts to $g(\bar{\mathbf{a}})$.

The impact of the initial SOC $\bar{x}(0)$ on the Pareto frontier is visualized in Figure~\ref{fig:impact_tube_size} (right). At first sight, the performance w.r.t objective $g$ seems to improve with increasing $\bar{x}(0)$ while the performance w.r.t. $h$ deteriorates and vice versa. To interpret this behaviour one has to take the time series $\bar{w}(n)$, $n \in [k:k+N-1]$, into acount, see Figure~\ref{fig:pareto} (top right, dotted, black line). If the batteries are completely discharged at the beginning of the time interval there is no possibility to reduce the energy demand by discharging the batteries. Hence, the tube constraints are violated due to $\bar{\mathbf{w}}(n) > \bar{c}(n)$ for small $n$. To compensate $\sum_{n=k}^{k+15} \bar{\zeta}(n) > \sum_{n=k}^{k+15} \bar{\mathbf{w}}(n)$, on the other hand, one needs to charge the batteries. Hence, the higher the SOC at the beginning, the harder it gets to trace the desired trajectory.

\begin{figure}[htb]
	\centering
	\includegraphics[width = 0.45\textwidth]{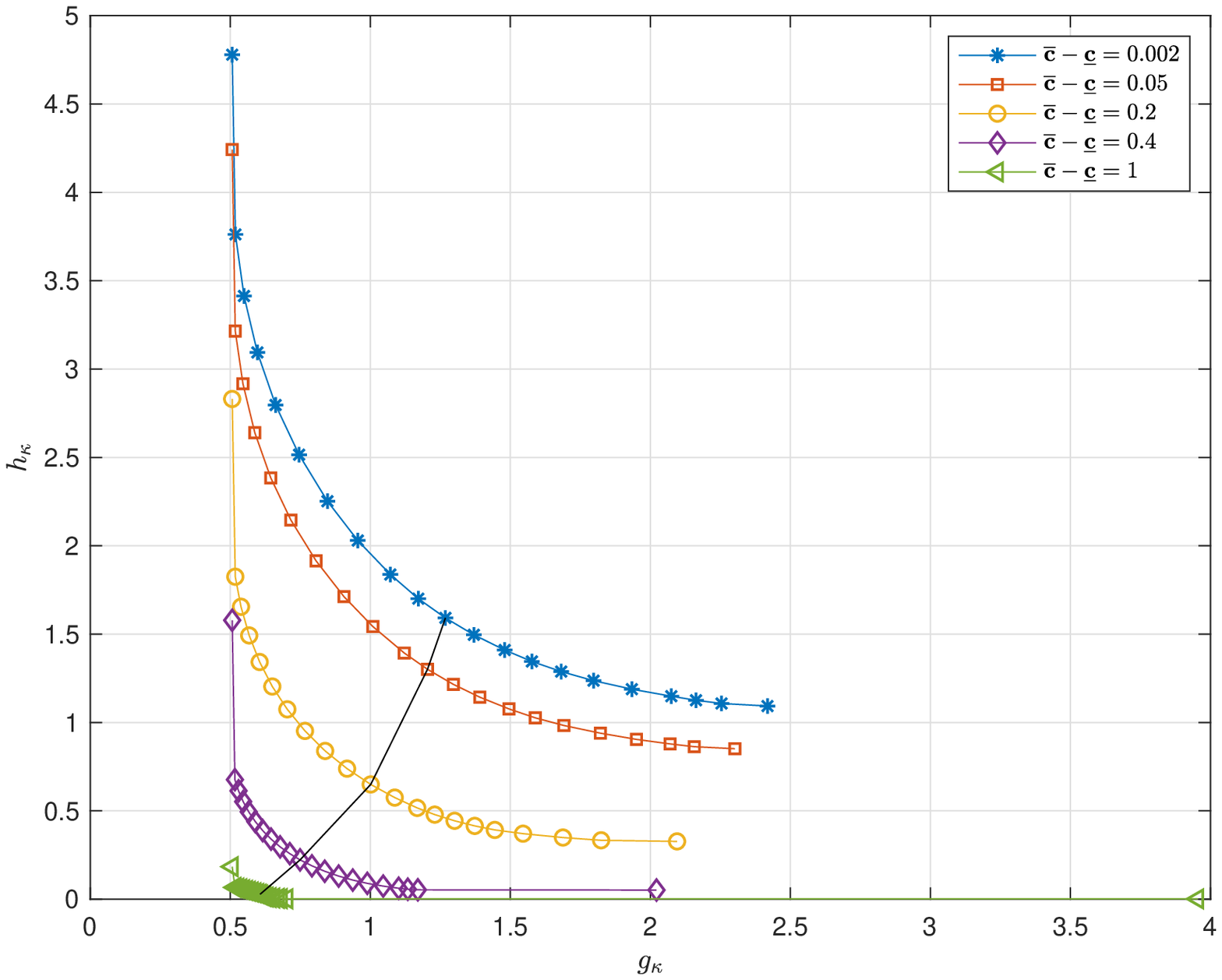}\hspace{5mm}
	\includegraphics[width = 0.45\textwidth]{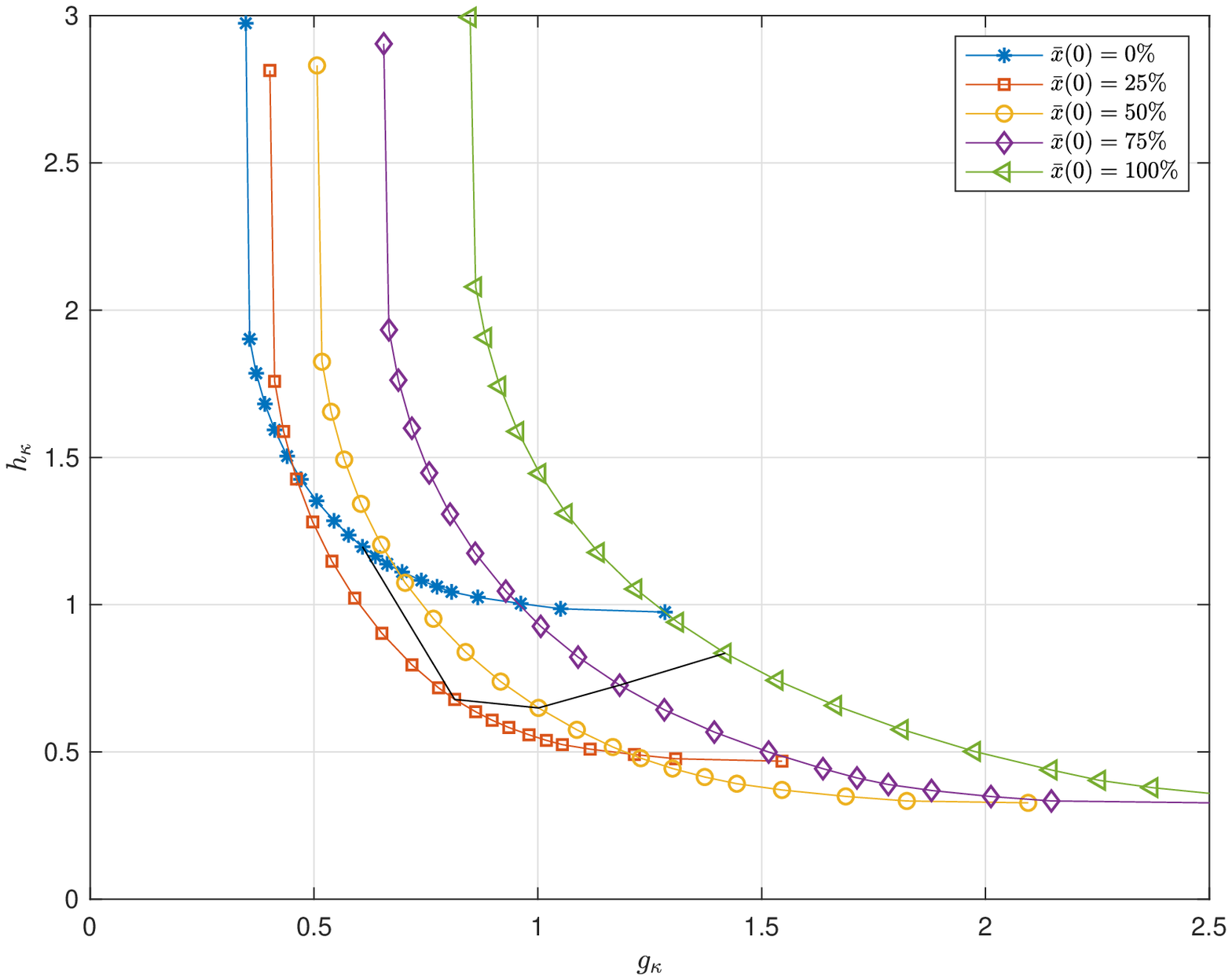}
	\caption{Visualisation of the impact of the tube constraints (left) and the initial SOC (right) on the Pareto frontier for $N=48$, $\mathrm{iter}=500$, $\rho=10^{-3}$, and $\kappa \in 0.05 \cdot [0:20]$. The solid black line connects all solutions corresponding to $\kappa = 0.5$.}
	\label{fig:impact_tube_size}
\end{figure}

\subsection{Proper Optimality}\label{subsec:ProperOptimality}

Considering an efficient point, improving the performance w.r.t. one objective is only possible at the expense of the performance w.r.t. the other. We investigate the trade-off between (potentially) conflicting objectives using the concept of proper optimality. Note that there are different definitions given by several authors~\cite{Bens83,SawaNaka85,Ehrg05}. In this paper, however, we focus on proper optimality in the sense of Geoffrion, which expands efficiency by introducing an upper bound on the trade-off between two objectives. 

\begin{definition}[Proper efficiency]\label{def:proper_min}
A point $x^\star \in \mathcal{M}$ is called \emph{properly efficient (in the sense of Geoffrion)} of the MOP~\eqref{OP} if $x^\star$ is efficient of~\eqref{OP} and there exists some $L > 0$ such that for all $i \in \{1,\ldots,m\}$ and $x \in \mathcal{M}$ with $f_i(x) < f_i(x^\star)$ there exists some index $j \in \{1,\ldots,m\}$ with $f_j(x^\star) < f_j(x)$ and
\begin{align}\label{eq:Geoffrion_constraint}
	\frac{f_i(x^\star) - f_i(x)}{f_j(x) - f_j(x^\star)} \quad \leq \quad L.
\end{align}
\end{definition}

\begin{remark}
In Figure~\ref{fig:proper_optimality} (top right) we consider the optimal solution $(g_{\kappa_0},h_{\kappa_0})$ of~\eqref{SMOP3} corresponding to a given $\kappa_0 \in [0,1]$. If we want to improve the perfomance w.r.t. objective $g$ by the amount of $g_{\kappa_0} - g_{\kappa_1}$, the performance w.r.t. objective $h$ worsens by the amount of $h_{\kappa_1} - h_{\kappa_0}$. Using function $H$ as defined in Remark~\ref{rem:image_Pareto_as_graph} and the relation $g_\kappa = P_1(\kappa)$, $\kappa \in [0,1]$, Inequality~\eqref{eq:Geoffrion_constraint} can be written as
\begin{align}
	0 \quad < \quad \frac{H(g_{\kappa_1}) - H(g_{\kappa_0})}{g_{\kappa_0} - g_{\kappa_1}} \quad \leq \quad L \quad = \quad L_{\kappa_0} \qquad \forall \, \kappa_1 \in (\kappa_0,1]. \nonumber
\end{align}
For the particular case $g_{\kappa_1} \to g_{\kappa_0}$ this reads as
\begin{align}
	 \lim_{g_{\kappa_1} \to g_{\kappa_0}} \frac{H(g_{\kappa_1}) - H(g_{\kappa_0})}{g_{\kappa_0} - g_{\kappa_1}} \quad \leq \quad L, \nonumber
\end{align}
where the left hand side is the negative of derivative of $H$ w.r.t. $g_\kappa$ at $g_{\kappa_0}$ if existent. The latter indicates the cost of an infinitesimal improvement w.r.t. the performance of $g$. Hence, proper optimality provides an upper bound to this cost which is an essential information for a decision maker.
\end{remark}

The following theorem provides a characterization of the properly efficient points of~\eqref{OP} using a weighted sum scalarization as in Proposition~\ref{prop:ehrgott_pareto_opt}.

\begin{theorem}[\cite{Ehrg05}, Theorem~3.15]
Consider MOP~\eqref{OP} with a convex feasible set $\mathcal{M}$ and convex component functions $f_i$, $i \in [1:m]$. Then the following holds true: A point $x^\star \in \mathcal{M}$ is properly efficient for~\eqref{OP} if and only if $x^\star$ is an optimal solution of~\eqref{eq:TheoremPropOptScalarization}
with positive weighting parameters $\mu_i > 0$, $i \in [1:m]$.
\end{theorem}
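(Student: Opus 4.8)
The statement is an equivalence, so the plan is to prove the two implications separately; only the ``only if'' direction will use the convexity hypotheses.

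\textbf{Sufficiency} ($\Leftarrow$). Suppose $x^\star$ minimizes $\sum_{i=1}^m \mu_i f_i$ over $\mathcal{M}$ with all $\mu_i > 0$. First I would note that $x^\star$ is efficient: if some $x$ satisfied $f_i(x) \le f_i(x^\star)$ for all $i$ with strict inequality for at least one index, then, all weights being positive, $\sum_i \mu_i f_i(x) < \sum_i \mu_i f_i(x^\star)$, contradicting optimality. For proper efficiency I would produce the Geoffrion bound explicitly. Fix $i$ and $x \in \mathcal{M}$ with $f_i(x) < f_i(x^\star)$. Optimality rearranges to $\mu_i\big(f_i(x^\star) - f_i(x)\big) \le \sum_{k\ne i}\mu_k\big(f_k(x) - f_k(x^\star)\big)$; the left-hand side is strictly positive, so some term on the right is positive, and choosing an index $j$ that maximizes $f_k(x)-f_k(x^\star)$ over $k$ gives $f_j(x) > f_j(x^\star)$ (necessarily $j\ne i$) together with
\[
\frac{f_i(x^\star)-f_i(x)}{f_j(x)-f_j(x^\star)} \;\le\; \frac{\sum_{k\ne i}\mu_k}{\mu_i} \;\le\; (m-1)\,\frac{\max_k \mu_k}{\min_k \mu_k} \;=:\; L .
\]
Since this $L$ is uniform in $i$ and $x$, the point $x^\star$ is properly efficient. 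Note that this direction uses no convexity.

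\textbf{Necessity} ($\Rightarrow$). This is the substantial direction, and it is where I would invoke convexity. Step~1 (obtaining weights via separation): I would form the upper-image set $\mathcal{Z} = f(\mathcal{M}) + \mathbb{R}^m_{\ge 0} \subseteq \mathbb{R}^m$, which is convex because $\mathcal{M}$ is convex and each $f_i$ is convex. Efficiency of $x^\star$ places $f(x^\star)$ on the boundary of $\mathcal{Z}$, so the supporting hyperplane theorem yields a normal $\mu \ge 0$, $\mu \ne 0$, with $\mu^\top f(x^\star) \le \mu^\top z$ for all $z \in \mathcal{Z}$; restricting to $z = f(x)$ shows that $x^\star$ already solves the scalarized problem for these nonnegative weights. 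Step~2 (strict positivity): it then remains to upgrade $\mu \ge 0$ to $\mu > 0$, and this is the only place where the Geoffrion bound $L$ (as opposed to mere efficiency) enters. I would argue by contradiction: assuming $\mu_\ell = 0$ for some $\ell$, I would use convexity to build feasible points that decrease $f_\ell$ by a fixed amount while the accompanying increases in the remaining coordinates are driven to zero along a suitable convex combination, thereby forcing the trade-off ratios $\big(f_\ell(x^\star)-f_\ell(x)\big)/\big(f_j(x)-f_j(x^\star)\big)$ to exceed any prescribed $L$. Equivalently, one shows that the linear system encoding ``a feasible direction improving coordinate $\ell$ with trade-off at most $L$'' is inconsistent and translates this into strict positivity of the multipliers through a theorem of the alternative of Gordan/Motzkin type applied to the difference vectors $f(x)-f(x^\star)$; convexity is precisely what makes the relevant direction set convex so that such a separation is available.

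\textbf{Main obstacle.} The sufficiency direction is routine. The genuine difficulty lies in Step~2 of necessity, namely converting Geoffrion's scalar trade-off bound into strict positivity of \emph{every} component of the separating normal. The delicate point is that the plain supporting hyperplane only guarantees $\mu \ge 0$, which corresponds to weak efficiency; excluding a vanishing component requires quantitatively exploiting the bound $L$ together with convexity to construct the contradicting sequence (or, equivalently, to set up the theorem of the alternative correctly). I would expect most of the effort, and the need for care with the convex combinations and the recession directions of $\mathcal{Z}$, to concentrate there.
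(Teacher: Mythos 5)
First, note that the paper does not prove this statement at all: it is imported verbatim from Ehrgott's book (Theorem~3.15), so your attempt can only be measured against the standard Geoffrion/Ehrgott proof. Your sufficiency direction is correct and is exactly that standard argument, including the explicit constant $L=(m-1)\max_k\mu_k/\min_k\mu_k$; as you say, no convexity is needed there.

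The necessity direction, however, contains a genuine gap in your Step~2. Your plan is to take the supporting normal $\mu\ge 0$ of $\mathcal{Z}=f(\mathcal{M})+\mathbb{R}^m_{\ge 0}$ at $f(x^\star)$ produced in Step~1 and to derive a contradiction from the assumption that some component $\mu_\ell$ vanishes. That cannot work, because proper efficiency does not imply that \emph{every} supporting normal at $f(x^\star)$ is strictly positive; it only guarantees that \emph{some} strictly positive weight vector works. Concretely, take $\mathcal{M}=[0,1]$, $f_1(t)=t$, $f_2(t)=1-t$: the point $x^\star=1$ is properly efficient with $L=1$, yet $\mu=(0,1)$ is a perfectly valid supporting normal (indeed $x^\star$ minimizes $0\cdot f_1+1\cdot f_2$ over $\mathcal{M}$), so no contradiction can follow from $\mu_1=0$. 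The separation argument of Step~1 may well hand you exactly such a degenerate normal, and nothing in your sketch prevents it. The correct route --- the one in Ehrgott/Geoffrion, which your final sentence gestures at but does not carry out --- bypasses the supporting hyperplane entirely: for each index $i$, proper efficiency with constant $M$ renders the convex system $f_i(x)<f_i(x^\star)$ and $f_i(x)+Mf_j(x)<f_i(x^\star)+Mf_j(x^\star)$ for all $j\ne i$ inconsistent over $\mathcal{M}$; the generalized Gordan theorem for convex functions (Fan--Glicksberg--Hoffman) then yields nonnegative multipliers $\lambda^i_j$ with $\sum_j\lambda^i_j=1$ and $\bigl(f_i(x)-f_i(x^\star)\bigr)+M\sum_{j\ne i}\lambda^i_j\bigl(f_j(x)-f_j(x^\star)\bigr)\ge 0$ on $\mathcal{M}$, and summing these $m$ inequalities produces weights $\mu_i=1+M\sum_{k\ne i}\lambda^k_i\ge 1>0$ for which $x^\star$ minimizes the weighted sum. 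Strict positivity thus emerges from the construction by design, rather than from excluding zero components of a pre-existing normal.
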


This result can be directly applied to~\eqref{MOP}.

\begin{corollary}
All efficient points of~\eqref{MOP} except for the extremal points $(\kappa\in\{0,1\})$ are properly efficient.
\end{corollary}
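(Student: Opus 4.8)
The plan is to read this corollary as nothing more than a specialization of the characterization theorem above (Theorem~3.15 from~\cite{Ehrg05}) to the concrete problem~\eqref{MOP}. First I would check that~\eqref{MOP} satisfies the hypotheses of that theorem with $m=2$, $f_1 = g$, $f_2 = h$. The two component functions $g$ and $h$ are convex --- in fact strictly convex, as established in the proof of Proposition~\ref{prop:uniqueness_of_minimizer_SOP} --- and the feasible set is convex, since $\mathbb{S}$ is convex, each $\mathbb{D}_i$ and hence $\mathbb{D}$ is convex by the linearity of~\eqref{eq:RES} and~\eqref{subeq:constraints}, and the coupling constraint~\eqref{NotationZbar} is affine. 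Thus the theorem is applicable.

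Next I would identify the weighted-sum scalarization~\eqref{eq:TheoremPropOptScalarization} with our scalarized problem~\eqref{SMOP}: choosing the weights $\mu_1 = \kappa$ and $\mu_2 = 1-\kappa$ reproduces exactly the objective $\kappa g + (1-\kappa) h$. These weights are \emph{both} strictly positive precisely when $\kappa \in (0,1)$, and exactly one of them vanishes at the two extremal parameters $\kappa \in \{0,1\}$. By Theorem~3.15, a feasible point is then properly efficient if and only if it is an optimal solution of~\eqref{SMOP} for some $\kappa \in (0,1)$.

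It remains to match this characterization with the geometric description of the efficient set provided earlier. By Proposition~\ref{prop:main_result}~(v) the efficient points are parametrized injectively through $P$ over $\kappa \in [0,1]$, with the two extremal points of the Pareto frontier being exactly the images $P(0)$ and $P(1)$, i.e.\ the cases $\kappa \in \{0,1\}$. Consequently, every efficient point other than these two extremal points arises as the (unique, by Proposition~\ref{prop:uniqueness_of_minimizer_SOP}) optimal solution of~\eqref{SMOP} for some $\kappa \in (0,1)$, and the positivity of both weights together with Theorem~3.15 immediately yields proper efficiency.

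I expect the only genuine care to be in the bookkeeping of the last step, namely arguing cleanly that \enquote{non-extremal efficient point} coincides with \enquote{optimal solution of~\eqref{SMOP} for some $\kappa \in (0,1)$}. This is exactly where Proposition~\ref{prop:main_result}, and in particular the injectivity of $P$ together with the fact that its image equals the Pareto frontier, does the real work; once that parametrization is invoked, the application of Theorem~3.15 itself reduces to the one-line observation that the scalarization weights $\kappa$ and $1-\kappa$ are strictly positive away from the extremes.
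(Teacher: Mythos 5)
Your proposal is correct and matches the paper's (essentially omitted) argument: the paper justifies the corollary only by the remark that Theorem~3.15 of~\cite{Ehrg05} ``can be directly applied'', and your write-up is exactly that application, with the needed convexity checks and the identification of non-extremal efficient points with optimal solutions of~\eqref{SMOP} for $\kappa\in(0,1)$ via Proposition~\ref{prop:main_result} spelled out. The only nitpick is that $g$ and $h$ are strictly convex only in their own arguments, not as functions of the joint variable $(\bar{\mathbf{z}},\mathbf{s})$ --- but plain convexity is all Theorem~3.15 requires, so this does not affect the argument.
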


We conclude this section with an example of calculating the bound $L$ for $\kappa = 0.2$ numerically. 
\begin{example}
Consider $\kappa_0 = 0.2$. We provide an intuitive way to determine an upper bound $L$ at the corresponding optimal value pair $(g_{\kappa_0},h_{\kappa_0}) = (1.452,0.442)$. Since $g_\kappa$ and $h_\kappa$ result from an optimization routine for which there is no explicit formula, we cannot compute $L$ explicitly.

Instead, we calculate 
\begin{align}
	L_{\kappa_0}(\kappa_1) := 
	\begin{cases}
		\frac{g_{\kappa_0} - g_{\kappa_1}}{h_{\kappa_1} - h_{\kappa_0}} & \text{if} \quad g_{\kappa_1} < g_{\kappa_0} \\
		\frac{h_{\kappa_0} - h_{\kappa_1}}{g_{\kappa_1} - g_{\kappa_0}} \, = \, \left(\frac{g_{\kappa_0} - g_{\kappa_1}}{h_{\kappa_1} - h_{\kappa_0}}\right)^{-1} & \text{if} \quad g_{\kappa_0} < g_{\kappa_1} \\
		0 & \text{else}
	\end{cases}
	\nonumber
\end{align}
for all $\kappa_1$ in the table in Figure~\ref{tab:pareto}~(left). Additionally we compute $L_{\kappa_0}(0.01)$ and $L_{\kappa_0}(0.99)$ to illustrate the behaviour near the extrema. Then, we choose $\max \{L_{\kappa_0}(\kappa_1)|\kappa_1 \in K\}$, where $K = 0.05 \cdot [0:20] \cup \{0.01,0.99\}$, as an approximation of the upper bound $L_{\kappa_0}$ in~\eqref{eq:Geoffrion_constraint}. The results can be found in the table in Figure~\ref{fig:proper_optimality}.

One can observe that $L_{0.2} \approx 4.74$ holds, i.e., to gain one quantity in $g$ direction one has to spend at maximum 4.74 quantities in $h$ direction and vice versa. In our case, i.e. $\kappa_0 = 0.2$, the bound on the second relation is active while the first one can replaced by $1/L_{0.2} = 0.211$, i.e., to gain one quantity in $g$ direction one has to spend at maximum 0.211 units in $h$ direction.

\noindent In Figure~\ref{fig:proper_optimality} (bottom right) the evolution of the bound $L_\kappa$ is illustrated depending for $\kappa \in [0.05,0.99]$. 
\begin{figure}[htb]
\begin{minipage}{0.45\textwidth}
	\centering
	\vspace{8mm}
	\begin{tabular}{|c||c|c|c|}\hline
		$\kappa_1$ & $g_{\kappa_1}$ & $h_{\kappa_1}$ & $L_{0.2}(\kappa_1)$
		\\ \hline\hline
		0.000 &    2.09  &     0.39 &        0.06 \\  
		0.010 &    1.91  &     0.39 &        0.10 \\  
		0.050 &    1.82  &     0.39 &        0.12 \\  
		0.100 &    1.69  &     0.40 &        0.15 \\  
		0.150 &    1.54  &     0.42 &        0.00 \\  
		0.200 &    1.45  &     0.44 &        {\bf 4.74} \\  
		0.250 &    1.38  &     0.46 &        4.09 \\  
		0.300 &    1.32  &     0.49 &        3.53 \\  
		0.350 &    1.25  &     0.52 &        3.06 \\  
		0.400 &    1.20  &     0.55 &        2.69 \\  
		0.450 &    1.13  &     0.61 &        2.29 \\  
		0.500 &    1.04  &     0.68 &        1.95 \\  
		0.550 &    0.96  &     0.77 &        1.68 \\  
		0.600 &    0.89  &     0.87 &        1.47 \\  
		0.650 &    0.82  &     0.98 &        1.29 \\  
		0.700 &    0.76  &     1.10 &        1.15 \\  
		0.750 &    0.71  &     1.23 &        1.02 \\  
		0.800 &    0.67  &     1.37 &        0.92 \\  
		0.850 &    0.64  &     1.52 &        0.82 \\  
		0.900 &    0.62  &     1.69 &        0.73 \\  
		0.950 &    0.61  &     1.86 &        0.66 \\  
		0.990 &    0.60  &     1.99 &        0.60 \\  
		1.000 &    0.60  &     2.03 &        0.59 \\  \hline
	\end{tabular}
\end{minipage}
\begin{minipage}{0.525\textwidth}
	\centering
	\includegraphics[scale=1]{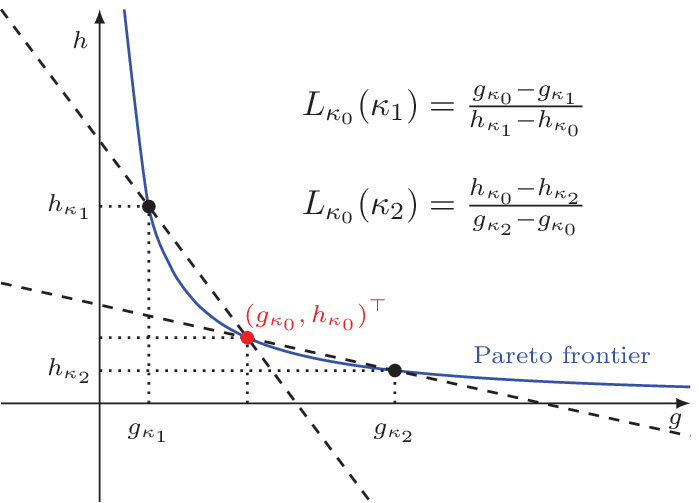}\\[3mm]
	\includegraphics[scale=0.4]{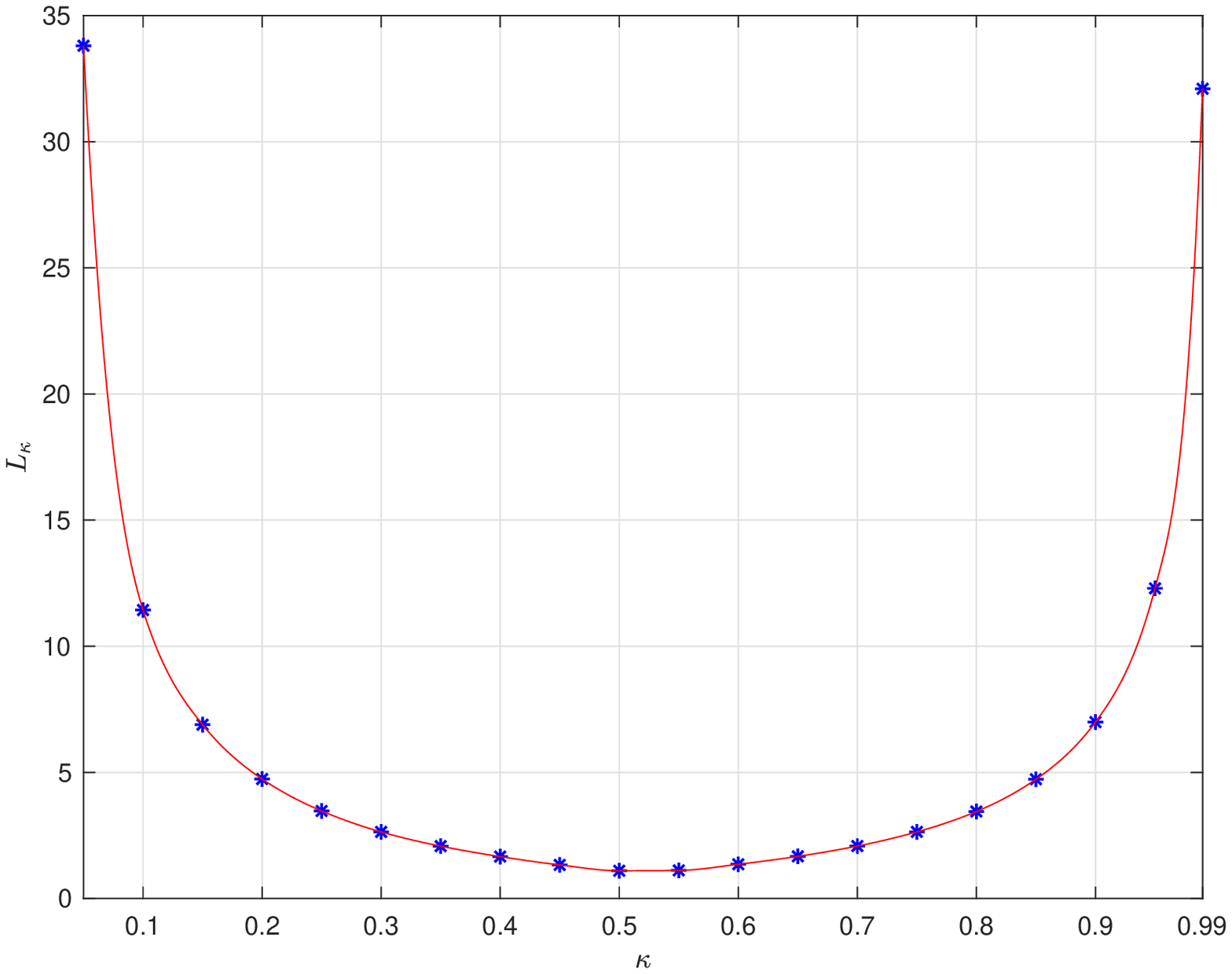}
\end{minipage}
\caption{Calculation of the bound $L_{0.2}$ in~\eqref{eq:Geoffrion_constraint} by varying $\kappa_1 \in [0,1]$ (left) and a visualization of the bound $L$ as the maximal absolute value of the differnce quotients (top right) and of the dependency of $L_\kappa$ from $\kappa \in [0.05,0.99]$ (bottom right).}
\label{fig:proper_optimality}
\end{figure}
\end{example}

\section{Conclusions and outlook}

In this paper, we have analysed the Pareto frontier of a multiobjective optimization problem, in which a trade-off between peak shaving and providing flexibility has to be made. To this end, we first considered the corresponding scalarization, linked it to ADMM in order to (numerically) compute its solution for a given scalarization parameter, and rigorously showed that by doing so we get the whole Pareto frontier. Moreover, by using the concept of proper optimality introduced by Geoffrion, we further investigated the Pareto frontier, which allowed to quantify the trade-off between the conflicting objectives.

There are several ways to extend our model: One could easily consider the impact of controllable loads as introduced in~\cite{BrauGrue16} or varying lower/upper bounds on (dis)charging rates. Another possible modification of our optimization problem is to additionally penalize the use of the batteries, e.g. in form of $\norm{u_i}$. The presented approach, especially ADMM still works, as long as the sets $\mathbb{D}_i$ of feasible solutions $\mathbf{z}_i$, $i \in [1:\mathcal{I}]$, stay polyhedral.

Throughout this paper, we assumed the net consumption to be given. Finding a suitable prediction method is an interesting topic for future research. For the goal of load shaping, the robustness of MPC schemes w.r.t. inaccurate forecasts was numerically investigated in~\cite{WortKell15}. Moreover, there are interesting approaches to estimate the energy generation within the prediction window more thoroughly using artificial neural networks as pointed out in~\cite{MabeFern08,ChowLee12}. A good starting point would be~\cite{FaulEnge18} and the references therein.

\bibliographystyle{unsrt}
\bibliography{mybibfile}

\end{document}